\def\gnk{G_{n,k}}
\def\cgnk{\A_{n,k}}
\def\cgnj{\A_{n,j}}
\def\Cal{\mathcal}
\def\A{{\Cal A}}
\def\gnk{G_{n,k}}
\def\bbr{{\Bbb R}}
\def\bbc{{\Bbb C}}
\def\rnn{\bbr^{n+1}}
\def\const{{\hbox{\rm const}}}
\def\vol{{\hbox{\rm vol}}}
\def\gnk{G_{n,k}}
\def\rn{\bbr^n}
\def\sn{S^{n-1}}
\def\part{\partial}
\def\intl{\int\limits}
\def\b{\beta}
\def\Gam{\Gamma}
\def\Om{\Omega}
\def\a{\alpha}
\def\om{\omega}
\def\vp{\varphi}
\def\gam{\gamma}
\def\Lam{\Lambda}
\def\sig{\sigma}
\def\lam{\lambda}
\def\z{\zeta}
\def\th{\theta}
\def\t{\tau}
\def\chi{{\bf 1}}
\newtheorem{theorem}{Theorem}[section]
\newtheorem{lemma}[theorem]{Lemma}
\theoremstyle{definition}
\newtheorem{example}[theorem]{Example}
\theoremstyle{remark}
\newtheorem{remark}[theorem]{Remark}
\theoremstyle{corollary}
\newtheorem{corollary}[theorem]{Corollary}
\newtheorem{proposition}[theorem]{Proposition}
\newtheorem{conjecture}[theorem]{Conjecture}
\numberwithin{equation}{section}
\newcommand{\be}{\begin{equation}}
\newcommand{\ee}{\end{equation}}
\newcommand{\bea}{\begin{eqnarray}}
\newcommand{\eea}{\end{eqnarray}}
\newcommand{\Bea}{\begin{eqnarray*}}
\newcommand{\Eea}{\end{eqnarray*}}
\def\sideremark#1{\ifvmode\leavevmode\fi\vadjust{\vbox to0pt{\vss
 \hbox to 0pt{\hskip\hsize\hskip1em
\vbox{\hsize2cm\tiny\raggedright\pretolerance10000
 \noindent #1\hfill}\hss}\vbox to8pt{\vfil}\vss}}}%
\begin{document}

\title[ Norm Estimates]
{Norm Estimates for $k$-Plane Transforms and  Geometric  Inequalities}

\author{B. Rubin}
\address{
Department of Mathematics, Louisiana State University, Baton Rouge,
LA, 70803 USA}

\email{borisr@math.lsu.edu}

\subjclass[2010]{Primary 44A12; Secondary 52A40}



\keywords{Radon  transforms, Grassmann manifolds, geometric inequalities.}

\begin{abstract}
The  article is devoted to remarkable interrelation between the norm estimates for  $k$-plane transforms in weighted and unweighted $L^p$ spaces  and geometric integral inequalities for  cross-sections of measurable sets in $\rn$. We also consider more general $j$-plane to $k$-plane transforms on affine Grassmannians and    their compact  modifications. The article contains a series of new integral-geometric inequalities with sharp constants, explicit equalities,  conjectures, and open problems.
\end{abstract}

\maketitle

\section{Introduction}

\setcounter{equation}{0}

Let $\cgnk$  be the affine Grassmannian bundle
 of all unoriented  $k$-dimensional planes  in
$\rn$, $0 < k <n$. We denote by   $\gnk$  the  Grassmann manifold
 of  $k$-dimensional  linear subspaces of $\rn$, i.e., $k$-planes passing through the origin.
 Each $k$-plane $\tau \in \cgnk$  will be parameterized by the pair
$( \xi, u)$, where $\xi \in \gnk$ and $ u \in \xi^\perp$, the
orthogonal complement of $\xi $ in $\rn$.
  The manifold  $\cgnk$ is endowed with the product measure $d\t=d_*\xi du$,
where $d_*\xi$ is the
 $O(n)$-invariant probability measure  on $\gnk$ and $du$ denotes the  Euclidean volume element on $\xi^\perp$.

 We will be dealing with standard spaces $L^p (\cgnk)$ and the corresponding  the weighted spaces
\be\label{eeee}  L^p_\nu(\cgnk)=\{ f: ||f||_{p,\nu}\equiv |||\t|^\nu f(\t)||_p<\infty \},\qquad 1\le p\le \infty,\ee
where $||\cdot ||_p$ is the usual norm in $L^p(\cgnk)$  and $|\tau|$ stands for  the  Euclidean distance from the origin to  $\tau$.

The $k$-plane Radon-John transform   of a sufficiently good function $f$ on $\bbr^n$  is a function $R_kf$ on $\cgnk$ defined by
 \be\label{uu7}
 (R_kf)(\t)\equiv (R_kf)(\xi,u) =\intl_{x\in\tau} f(x) \,d_\tau x,
 \ee
where $d_\tau x$ is the  Euclidean volume element on $\t$. A  more general $j$-plane to $k$-plane transform (or $(j,k)$-transform for short) takes a
function $f$ on $\cgnj$ to a function $R_{j,k}f$ on $\cgnk$, $0\le j<k<n$, by the formula
 \be\label{uu7j}
(R_{j,k}f)(\t)\!\equiv \!(R_{j,k}f)(\xi,u)\! = \!\!\intl_{\z\subset\tau}\!\! f(\z) \,d_\tau \z.
\ee
Here $d_\tau \z$  stands for the canonical  measure on the affine Grassmann manifold of all $j$-dimensional planes in $\t$.
  In the case $j=0$, $0$-planes are just points, so that  $\A_{n,0}=\rn$ and $R_{0,k}=R_{k}$.

A background information about $k$-plane transforms can be found, e.g., in the books by Helgason \cite{H11}, Markoe \cite{Mar}, and   Gonzalez  \cite {Go10}, containing many references on this subject; see also   Keinert \cite{Ke} and Rubin \cite{Ru04b, Ru15}.
These publications are mainly focused on inversion and range characterization problems. More general $(j,k)$-transforms, which  fall into the scope of the general Helgason's group-theoretic double fibration setting,   were apparently first mentioned by Strichartz \cite [p. 701]{Str81}. The papers by Graev  \cite{Gr},  Gonzalez and Kakehi  \cite {GK},   Rubin \cite{Ru04d}, Rubin and Wang \cite{RW16a, RW16b} deal  with inversion formulas for these transforms. See also Strichartz \cite{Str86} regarding $L^2$ harmonic analysis on Grassmannian bundles.

A tremendous activity related to $L^p$-$L^q$ norm estimates for Radon transforms was initiated by pioneering papers of Solmon
\cite{So76, So79}, Oberlin \cite{Ob},  Calder\'on \cite{Cal},   Oberlin and Stein \cite{OS},   Strichartz \cite {Str81}, Christ \cite{Chr84},  Drury  \cite{Dru83, Dru84, Dru86, Dru89, Dru89a}. Sharp constants and extremizers for Radon-like  transforms were studied in the recent papers by Christ \cite{Chr14},  Drouot \cite{Dro14, Dro15, Dro16}, and  Flock \cite{Fl16}: see also  Baernstein II and  Loss \cite {BL}, Bennett, Bez and Gutiérrez \cite {BBG}, Gressman \cite{Gre1, Gre2},    Oberlin  \cite{ObR},  Tao and Wright \cite{TW} for further developments and perspectives. Norm inequalities with sharp constants  for $k$-plane transforms and $(j,k)$-transforms   in weighted spaces (\ref{eeee}) were obtained by Rubin \cite {Ru14}.

Besides afore-mentioned purely analytic issues, our concern  is related to the variety  of publications in the area of integral  geometry and probability dealing with volumes of cross-sections of geometrical objects. This direction of investigation amounts to the pioneering works by Steiner, Minkowski, Blaschke, and subsequent publications of many  authors. Numerous related references can be found  in  the books by  Burago and  Zalgaller  \cite{BuZ}, Gardner \cite{Ga06}, Schneider \cite{Sc13},     Schneider and  Weil  \cite{SchW}; see also  the papers by Schneider \cite{Sc85},    Grinberg  \cite{Gr91},  Lutwak \cite{Lut93}, Gardner \cite{Ga07},    Bianchi, Gardner and Gronchi  \cite{BGG},       Chasapis,  Giannopoulos and Liakopoulos  \cite{CGL},      Dann,  Kim and   Yaskin  \cite{DKY},  Dann,  Paouris and  Pivovarov \cite{DPP},     Dafnis and Paouris \cite{DP}, to mention a few.

In fact, the functional-analytic and geometric approaches are closely related because many results in analysis  can be  converted into similar statements in geometry, and vice versa. For example, an intimate connection of the Busemann intersection inequality \cite{Bu}  to the Oberlin-Stein $L^p$-$L^q$ estimate for the Radon transform \cite {OS} was noticed by Lutwak  \cite [p. 162,  (3)]{Lut93}; see subsection \ref{unwe} for details.  One of our aims in the present article is to extend Lutwak's remarkable observation to more general Radon-like transforms, derive  new inequalities, and formulate some conjectures.

\vskip 0.3 truecm

\noindent {\bf Plan of the Paper.}  In Section 2 we review basic  facts related to norm inequalities for operators  (\ref{uu7})-(\ref{uu7j})  and consider the corresponding weighted and unweighted  inequalities for planar sections of  measurable sets in $\rn$. The main focus is made on sharp constants and their asymptotics as $n\to \infty$.
Section 3  contains description of the transition from  Radon transforms on affine Grassmannians to   Funk type transforms on the unit sphere and compact Grassmannians. The results of this section are used in Section 4 to obtain new geometric inequalities for  dual intrinsic volumes  and central sections of star sets.

\vskip 0.3 truecm

\noindent {\bf Main Results.} The main new results of the paper are the general inequalities (\ref {op1})  and (\ref {op1as}), Conjectures \ref{conj} and \ref{abji}, Theorems \ref{lonm3p0} and \ref{mn1}. The paper contains  many  consequences of these statements which might be of independent interest.

\vskip 0.3 truecm

\noindent { \bf Notation.}  We will be using the same notation as in (\ref{eeee})-(\ref{uu7j}); $V_n (\cdot)$ denotes the $n$-dimensional volume function;
 $ \; \sigma_{n-1}  = 2 \pi^{n/2}/
 \Gamma(n/2)$ is the area of the unit sphere $S^{n-1}$ in $\rn; \;$ $d\theta$ stands for the surface element of $S^{n-1}$;  $d_*\theta$  ($= d\theta/\sigma_{n-1}$) is  the corresponding normalized surface element. The notation  $d_* (\cdot )$ is also used for the probability measure on Grassmann manifolds. We write  $B_{n} = \{ x \in \bbr^n: \ |x| \le 1 \}$ for  the unit ball in $\rn$;
 $b_{n}$ denotes the  volume  of  $B_{n}$, so that
 \be\label{bn} b_{n} = \frac{\sigma_{n-1}}{n}=\frac{\pi^{n/2}}{  \Gamma (n/2+1)}.\ee
A compact subset of $\rn$ is called a {\it body} if it is the closure of its interior. As usual, for $1\le p\le \infty$, the notation $p'$ stands for the dual exponent, so that $\; 1/p+1/p'=1$.  We say that  an integral under consideration
 exists in the Lebesgue sense if it is finite when the corresponding integrand is replaced by its absolute value.

\section {Norm Estimates of $k$-Plane Transforms and Affine Sections of Measurable Sets}

In this section we review basic facts related to the action of $k$-plane transforms (\ref{uu7})  and  $(j,k)$-transforms (\ref{uu7j}) in weighted and unweighted $L^p$ spaces and derive some  integral-geometric inequalities for affine sections of measurable sets in $\rn$.  Our main concern is sharp constants in the corresponding inequalities.

\subsection {Weighted Norm Estimates} \label{lplp}
The following statements were proved in our paper \cite[Theorems 1.1 and 1.2]{Ru14}.

\begin{theorem} \label{kk5cds} Let $1\le p \le \infty$,  $\nu=\mu-k/p'$,  $\mu>k-n/p$,
\be \label{3458}
\om_{k,p,\mu}(n)=\pi^{k/2p'}\,\left (\frac{ \Gam(n/2)}{\Gam((n-k)/2)}\right )^{1/p}\, \frac{\Gam ((\mu +n/p -k)/2)}{\Gam ((\mu +n/p)/2)}.
\ee
Then $R_k$ is a linear bounded operator from $L^p_\mu(\bbr^n)$ to $L^p_\nu(\cgnk)$ with the norm
\be\label{norm}
||R_k|| = \om_{k,p,\mu}(n).\ee
\end{theorem}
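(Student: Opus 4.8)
The plan is to establish the upper bound $\|R_kf\|_{p,\nu}\le \om_{k,p,\mu}(n)\,\|f\|_{p,\mu}$ by a weighted Schur-type test and then to prove its sharpness through an explicit family of near-extremizers. Throughout I set $s=\mu+n/p$ and, since $|R_kf|\le R_k|f|$, assume $f\ge 0$. Writing a point of $\cgnk$ as $(\xi,u)$ with $u\in\xi^\perp$ and recalling $|\t|=|u|$, I have $(R_kf)(\xi,u)=\intl_\xi f(y+u)\,dy$; because $y\perp u$ forces $|y+u|^2=|y|^2+|u|^2$, inserting the splitting weight $|y+u|^{\pm s/p'}$ and applying Hölder's inequality on $\xi\cong\bbr^k$ gives
\[
(R_kf)(\xi,u)\le \Big(\intl_\xi f(y+u)^p\,|y+u|^{sp/p'}\,dy\Big)^{1/p}\Big(\intl_\xi |y+u|^{-s}\,dy\Big)^{1/p'}.
\]
The dual factor is the beta integral $\intl_{\bbr^k}(|y|^2+|u|^2)^{-s/2}\,dy=A\,|u|^{k-s}$ with $A=\pi^{k/2}\,\Gam((s-k)/2)/\Gam(s/2)$, which converges exactly because the hypothesis $\mu>k-n/p$ is equivalent to $s>k$.

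Next I would raise this to the $p$-th power, multiply by $|u|^{\nu p}$, and integrate $d_*\xi\,du$ over $\cgnk$. For fixed $\xi$ the map $(y,u)\mapsto x=y+u$ is an isometry of $\xi\times\xi^\perp$ onto $\rn$ with $|y+u|=|x|$ and $|u|=|P_{\xi^\perp}x|$, where $P_{\xi^\perp}$ is the orthogonal projection onto $\xi^\perp$; this collapses the $y$- and $u$-integrals into a single integral over $\rn$. After Fubini the only $\xi$-dependence left is the projection factor, and by $O(n)$-invariance
\[
\intl_{\gnk}|P_{\xi^\perp}x|^{\,\mu-n/p'}\,d_*\xi=|x|^{\,\mu-n/p'}\,C_0,\qquad C_0=\intl_{\gnk}|P_{\xi^\perp}e|^{\,\mu-n/p'}\,d_*\xi
\]
for any fixed unit vector $e$. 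The remaining powers of $|x|$ combine to $sp/p'+(\mu-n/p')=\mu p$ precisely because $\nu=\mu-k/p'$, so the $\rn$-integral reproduces $\|f\|_{p,\mu}^p$ and the upper bound reduces to $A^{1/p'}C_0^{1/p}$.

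It remains to evaluate $C_0$. Under $d_*\xi$ the quantity $|P_{\xi^\perp}e|^2$ is Beta$\big((n-k)/2,k/2\big)$-distributed, so $C_0$ is again a beta integral, equal to $\Gam((s-k)/2)\,\Gam(n/2)\big/\big(\Gam((n-k)/2)\,\Gam(s/2)\big)$. Since $A^{1/p'}$ contributes the factor $\pi^{k/2p'}\big(\Gam((s-k)/2)/\Gam(s/2)\big)^{1/p'}$, multiplying the two gamma ratios (the powers $1/p'$ and $1/p$ adding to $1$ on the $\Gam((s-k)/2)/\Gam(s/2)$ factor) yields exactly $\om_{k,p,\mu}(n)$ as in (\ref{3458}). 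For the matching lower bound I would test the inequality on truncations $f_T(x)=|x|^{-s}\,\chi_{\{1\le|x|\le T\}}$ of the critical homogeneous profile $|x|^{-s}$, which is precisely the function that turns the Hölder step into an equality; computing $R_kf_T$ (once more a beta integral), both $\|f_T\|_{p,\mu}^p$ and $\|R_kf_T\|_{p,\nu}^p$ diverge like $\log T$, and their ratio tends to $\om_{k,p,\mu}(n)$ as $T\to\infty$, showing the constant cannot be lowered.

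The steps I expect to require genuine care, as opposed to routine computation, are three. First, the bookkeeping that makes the two beta integrals $A$ and $C_0$ multiply out to the closed form (\ref{3458}); this is exactly where the hypothesis $\mu>k-n/p$ enters, as the convergence threshold $s>k$, and where the relation $\nu=\mu-k/p'$ is used to balance the homogeneity. Second, the distributional identity $|P_{\xi^\perp}e|^2\sim\text{Beta}\big((n-k)/2,k/2\big)$, which underlies the evaluation of $C_0$. Third, the limiting argument in the sharpness step: since $|x|^{-s}$ is only borderline in $L^p_\mu(\rn)$, the extremizer is not attained and one must justify the passage to the limit in the truncated quotient. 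Finally, the endpoints $p=1$ (where the Hölder step is vacuous and equality holds for every $f\ge0$) and $p=\infty$ should be handled directly rather than through the above scheme.
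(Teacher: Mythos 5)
Your argument is correct, and it is essentially the same proof as in \cite[Theorem 1.1]{Ru14}, to which the paper defers (see the two-sided scheme (\ref{oopli})--(\ref{oopli1}) in comment {\bf 2} of Subsection \ref{lplp}): H\"older's inequality with the power-weight splitting, Fubini on $\xi\times\xi^\perp\cong\rn$, and the Grassmannian moment of $|P_{\xi^\perp}x|$ yield the upper bound, whose constant $A^{1/p'}C_0^{1/p}$ collapses to $\om_{k,p,\mu}(n)$, while the matching lower bound comes from near-extremizers modeled on the critical power $|x|^{-\mu-n/p}$. The points you flag --- the common convergence threshold $\mu>k-n/p$ for both beta-type integrals, the Beta$((n-k)/2,k/2)$ law of $|P_{\xi^\perp}e|^2$, the truncation limit in the sharpness step, and the direct treatment of $p=1$ and $p=\infty$ --- are precisely the details that need checking, and they all work out as you describe.
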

\begin{theorem} \label{kk5cdj} Let $1\le p \le \infty$,  $\nu=\mu-(k-j)/p'$,  $\mu>k-n/p-j/p'$,
\be \label{3458j}
\om_{j,k,p,\mu} (n)\!=\!\pi^{(k-j)/2p'}\left (\frac{ \Gam((n\!-\!j)/2)}{\Gam((n\!-\!k)/2)}\right )^{1/p}
\frac{\Gam ((\mu \!+\!n/p\! -\!k\!+\!j/p')/2)}{\Gam ((\mu \!+\!n/p\!-\!j/p)/2)}.\ee
Then  $R_{j,k}$ is a linear bounded operator from $L^p_\mu(\cgnj)$ to $L^p_\nu(\cgnk)$ with the norm
\be\label{norm1}||R_{j,k}||=\om_{j,k,p,\mu} (n).\ee
\end{theorem}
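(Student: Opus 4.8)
The plan is to treat $R_{j,k}$ as a generalized Radon transform on the affine incidence variety $\{(\zeta,\tau):\zeta\subset\tau\}$ and to establish the two matching bounds $\|R_{j,k}\|\le\om_{j,k,p,\mu}(n)$ and $\|R_{j,k}\|\ge\om_{j,k,p,\mu}(n)$ separately. For the upper bound I would follow the Oberlin--Stein scheme: apply Hölder's inequality on each fiber $\{\zeta\subset\tau\}$ against a power splitting function, then integrate in $\tau$ and swap the order of integration through the double-fibration Fubini formula to return the mass to the base $\cgnj$. For the lower bound I would test on radial near-extremizers. Throughout, the $O(n)$-invariance of the weights $|\tau|^\nu$, $|\zeta|^\mu$ and of the underlying measures reduces every auxiliary computation to a one-dimensional Beta integral. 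Specializing to $j=0$, where $\cgnj=\rn$ and $R_{0,k}=R_k$, should recover Theorem~\ref{kk5cds}, which serves as a consistency check.

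The first step is the geometric reduction. Parameterizing $\zeta=(\eta,v)$ and $\tau=(\xi,u)$, the condition $\zeta\subset\tau$ means $\eta\subset\xi$ and that the foot of $\zeta$ differs from $u$ by a vector $w\in\xi\ominus\eta$; since $u\perp w$ one obtains the key relation $|\zeta|^2=|\tau|^2+|w|^2$, whence $|\zeta|\ge|\tau|$, and the fiber measure factors as $d_\tau\zeta=d_*\eta\,dw$ over $G(\xi,j)\times(\xi\ominus\eta)$. Integrating out the $(k-j)$-dimensional variable $w$ shows that on a radial density $f(\zeta)=f_0(|\zeta|)$ the transform acts by the one-dimensional kernel
\[
(R_{j,k}f)(\tau)=\sigma_{k-j-1}\int_{|\tau|}^\infty f_0(s)\,(s^2-|\tau|^2)^{(k-j-2)/2}\,s\,ds .
\]
In the same coordinates the weighted norms of radial densities become one-dimensional: $\|f\|_{p,\mu}^p=\sigma_{n-j-1}\int_0^\infty|f_0(s)|^p s^{\mu p+n-j-1}\,ds$, and similarly on $\cgnk$ with $\sigma_{n-k-1}$ and exponent $\nu p+n-k-1$.

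For the upper bound (take $1<p<\infty$) I would insert the splitting function $h(\zeta)=|\zeta|^{-\delta}$ and estimate, by Hölder in the fiber variable,
\[
|(R_{j,k}f)(\tau)|^p\le \Big(\int_{\zeta\subset\tau}|f(\zeta)|^p|\zeta|^{-\delta p}\,d_\tau\zeta\Big)\,A(\tau)^{p/p'},\qquad A(\tau)=\int_{\zeta\subset\tau}|\zeta|^{\delta p'}\,d_\tau\zeta .
\]
By the radial kernel, $A(\tau)$ is an explicit power of $|\tau|$ given by a Beta integral. Multiplying by $|\tau|^{\nu p}$, integrating over $\cgnk$, and swapping the order of integration via the double-fibration identity converts the bound into a weighted integral over $\cgnj$ whose inner factor $\int_{\tau\supset\zeta}|\tau|^{\nu p}A(\tau)^{p/p'}\,d^\zeta\tau$ is again a power of $|\zeta|$ evaluated by a Beta integral. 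Choosing $\delta$ so that this power equals $|\zeta|^{(\mu+\delta)p}$ forces the critical exponent and leaves exactly the constant $\om_{j,k,p,\mu}(n)$: the factor $\Gamma((k-j)/2)$ produced by the Beta integral cancels against $\sigma_{k-j-1}$, and assembling the remainder with $(\sigma_{n-k-1}/\sigma_{n-j-1})^{1/p}$ yields both the power $\pi^{(k-j)/2p'}$ and the Gamma quotients appearing in \eqref{3458j}. The integrals converge precisely when the numerator argument $(\mu+n/p-k+j/p')/2$ is positive, i.e. under the hypothesis $\mu>k-n/p-j/p'$.

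The reverse inequality I would obtain from the radial near-extremizers $f_\varepsilon(\zeta)=|\zeta|^{-\alpha_0}\chi_{\{1\le|\zeta|\le\varepsilon^{-1}\}}$, where $\alpha_0=\mu+(n-j)/p$ is the borderline exponent for which $|\zeta|^{-\alpha_0}$ is critically non-integrable in $L^p_\mu(\cgnj)$: for this power the fiberwise Hölder inequality above is saturated, so the ratio $\|R_{j,k}f_\varepsilon\|_{p,\nu}/\|f_\varepsilon\|_{p,\mu}$ tends to $\om_{j,k,p,\mu}(n)$ as $\varepsilon\to0$. The endpoint cases $p=1$ and $p=\infty$ I would treat directly, evaluating the corresponding one-dimensional integral and supremum. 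I expect the main obstacle to be the exact bookkeeping in the double-fibration Fubini formula—pinning down the normalization of the dual fiber measure $d^\zeta\tau$ sharply enough to recover the constant on the nose rather than merely up to a factor—together with the limiting argument made necessary by the fact that the formal extremizer $|\zeta|^{-\alpha_0}$ is homogeneous and hence not itself a member of $L^p_\mu(\cgnj)$.
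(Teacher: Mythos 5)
You should know at the outset that the paper itself contains no proof of Theorem \ref{kk5cdj}: it is quoted from \cite[Theorem 1.2]{Ru14}, and all the paper records about that proof is its two-sided structure (comments {\bf 2}--{\bf 4} in Subsection \ref{lplp}) --- an upper bound valid for every $f$ together with a matching lower bound realized on nonnegative test functions. Your proposal reconstructs an argument with exactly this structure, and it is correct. The geometric reduction is right ($|\zeta|^2=|\tau|^2+|w|^2$ and $d_\tau\zeta=d_*\eta\,dw$), and the Fubini identity you single out as the main obstacle does hold with no Jacobian factor: $d\tau\,d_\tau\zeta=d\zeta\,d^\zeta\tau$, where $d^\zeta\tau$ is the invariant probability measure on $\{\xi\in\gnk:\ \xi\supset\eta\}$ and $\tau=\xi+P_{\xi^\perp}v$, because both iterated measures induce the unique invariant probability measure on the flag manifold $\{(\eta,\xi):\eta\subset\xi\}$, while $dv=du\,dw$ for the Euclidean parts. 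With the splitting exponent fixed by $\delta p'=-\alpha_0$, $\alpha_0=\mu+(n-j)/p$, the two auxiliary integrals are
\[
A(\tau)=\pi^{\frac{k-j}{2}}\,\frac{\Gam\big(\frac{\alpha_0-k+j}{2}\big)}{\Gam\big(\frac{\alpha_0}{2}\big)}\,|\tau|^{\,k-j-\alpha_0},
\qquad
\intl_{\{\xi\supset\eta\}}|\tau|^{\lam}\,d^\zeta\tau=
\frac{\Gam\big(\frac{n-j}{2}\big)\,\Gam\big(\frac{n-k+\lam}{2}\big)}{\Gam\big(\frac{n-k}{2}\big)\,\Gam\big(\frac{n-j+\lam}{2}\big)}\,|\zeta|^{\lam},
\]
both convergent (at the relevant $\lam$) exactly under the hypothesis $\mu>k-n/p-j/p'$, and assembling them reproduces (\ref{3458j}) on the nose; likewise the truncated powers $f_\varepsilon$ give the matching lower bound, since both weighted norms diverge logarithmically with ratio of leading coefficients equal to $\om_{j,k,p,\mu}(n)$. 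The endpoints are as easy as you claim: $p=1$ is the identity (\ref{eq1}), and for $p=\infty$ the function $|\zeta|^{-\mu}$ is even an exact extremizer.

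One conceptual slip needs repair: the choice of $\delta$ is \emph{not} forced by matching powers of $|\zeta|$. Because $\nu=\mu-(k-j)/p'$, the exponent produced after the swap equals $(\mu+\delta)p$ for \emph{every} admissible $\delta$ --- homogeneity is automatic --- so a generic $\delta$ still yields a valid bound, but with a constant that is in general strictly larger than $\om_{j,k,p,\mu}(n)$. What singles out $\delta=-\alpha_0/p'$ is saturation of the fiberwise H\"older step by the critical power $|\zeta|^{-\alpha_0}$ (equivalently, minimization of the $\delta$-dependent constant); only for this choice do the upper and lower bounds meet. This determination is already implicit in your lower-bound paragraph, so the fix is cosmetic; with it, your outline is a complete and correct strategy --- the classical Schur-test-plus-truncated-powers argument --- consistent with the structure the paper reports for the proof in \cite{Ru14}.
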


Some comments are in order.

{\bf 1.}  The assumptions for $\mu$ and $\nu$ in these theorems are best possible.  Theorem \ref{kk5cds} is formally contained in  Theorem \ref{kk5cdj}, in particular, $\om_{k,p,\mu} (n)=\om_{0,k,p,\mu} (n)$.

\vskip 0.3 truecm

{\bf 2.} Theorems  \ref{kk5cds} and \ref{kk5cdj} were obtained in \cite{Ru14} as consequences of the corresponding norm inequalities for functions.  For example, to prove (\ref{norm}), we established that
\be \label{oopli} ||R_k f||_{L^p_\nu(\cgnk)}  \le  \om_{k,p,\mu}(n)\, ||f||_{L^p_\mu(\bbr^n)}\ee
 for {\it every} $f\in L^p_\mu(\bbr^n)$ and
\be \label{oopli1} ||R_k f_0||_{L^p_\nu(\cgnk)}  \ge  \om_{k,p,\mu}(n)\, ||f_0||_{L^p_\mu(\bbr^n)}\ee
 for {\it some  nonnegative} $f_0\in L^p_\mu(\bbr^n)$.

It means that (\ref{oopli}) is sharp in the class of nonnegative functions in $L^p_\mu(\bbr^n)$, which is smaller than the entire space $L^p_\mu(\bbr^n)$. This remark will guarantee the sharpness of geometric inequalities in  Subsection \ref{Weig}.

\vskip 0.3 truecm

{\bf 3.}
  In the case  $p=1$, we have exact equalities
 \be\label{eq}
 \intl_{\cgnk} (R_kf)(\t)\, |\t|^\mu\, d\t= \om_{k,1,\mu}(n) \intl_{\rn} f(x)\, |x|^\mu\, dx,\ee
 \be\label{eq1}
 \intl_{\cgnk} (R_{j,k} f)(\t)\, |\t|^\mu\, d\t= \om_{j,k,1,\mu}(n) \intl_{\cgnj} f(\z)\, |\z|^\mu\, d\z,\ee
 where $\mu>k-n$,
\be\label{eq2}
  \om_{k,1,\mu}(n)=\frac{ \Gam(n/2)}{\Gam((n-k)/2)}\, \frac{\Gam ((\mu +n -k)/2)}{\Gam ((\mu +n)/2)}, \ee
  \be\label{eq3}
  \om_{j,k,1,\mu}(n)= \frac{ \Gam((n-j)/2)}{\Gam((n-k)/2)}\, \frac{\Gam ((\mu +n -k)/2)}{\Gam ((\mu +n-j)/2)}.\ee
 These formulas can be found  in  our papers \cite[formula (2.17)]{Ru04b} and \cite[formula (2.20)]{Ru04d} in different notation. It is assumed that either side of (\ref{eq}) and  (\ref{eq1}) exists in the Lebesgue sense.
\vskip 0.3 truecm

{\bf 4.} We do not know if the exact equalities
\be\label{aascal}  ||R_k f||_{L^p_\nu(\cgnk)} = \om_{k,p,\mu}(n)\, ||f||_{L^p_\mu(\bbr^n)}, \ee
\be\label{bbscal}  ||R_{j,k} f||_{L^p_\nu(\cgnk)} = \om_{j,k,p,\mu}(n)\, ||f||_{L^p_\mu(\cgnj)},\ee
 are available for some $f$  when  $p \neq 1$. It may be a  challenging open problem.

\vskip 0.3 truecm

{\bf 5.}  One can easily find asymptotics of the norms (\ref{norm}) and (\ref{norm1}) as $n\to \infty$. Indeed, the well-known property of gamma functions \[\Gam (z+a)/\Gam (z+b) \sim z^{a-b},\qquad z\to \infty,\]
    (see, e.g., \cite{TE}) yields
  \be\label {as}
  \om_{j,k,p,\mu} (n)=n^{-(k-j)/2p'}\, (\om^0_{j,k,p} +o(1)), \qquad n\to \infty.\ee
 \be\label {as1}
 \om^0_{j,k,p}= (2\pi)^{(k-j)/2p'}\, p^{(k-j)/2}.\ee

\subsection{Unweighted $L^p$-$L^q$ Estimates} Let $0\le j< k<n$. It is known \cite[Corollary 2.6]{Ru04d} that for $f\in L^p (\cgnj)$, the integral $R_{j,k}f$ is finite provided $1\le p < (n-j)/(k-j)$ and this bound is sharp.
By (\ref{eq1}) with $\mu=0$, $R_{j,k}$ acts as a linear bounded operator from $L^1 (\cgnj)$ to $L^1 (\cgnk)$  with  the operator norm $1$.
Further, by the scaling argument  (cf. \cite[p. 118]{St}), if the estimate
\be\label{scal} || R_{j,k} f||_{ L^{q}(\cgnk)} \le c\, ||f||_{L^{p}(\cgnj)} \ee
holds for every $f\in L^p (\cgnj)$  with a constant $c$ independent of $f$, then, necessarily,
\be\label{scal2}
1/q=\frac{n-j}{n-k}\, (1/p)  -\frac{k-j}{n-k}.\ee
It means that the set of all pairs $(1/p, 1/q)$ which are  admissible in (\ref{scal}), is a subset of the half-open segment $(P,Q]$ in $[0,1]\times [0,1]$ connecting the points $P=((k-j)/(n-j), 0)$ and Q=(1,1). To achieve the desired boundedness result, the segment $(P,Q]$ must be replaced by the smaller closed segment  $[\tilde P,Q]$ with $\tilde P=((k+1)/(n+1), 1/(n+1))$, and we have the following theorem.

\begin{theorem}\label{OSBLCDF} Let $0<k<n$,
\[ p=\frac{n+1}{k+1}, \qquad q=n+1, \qquad \Om_k (n)=\left ( 2^{k-n}\, \frac{\sig_k^n}{\sig_n^k}\right )^{1/(n+1)}.\]
Then
\be\label{scamm} || R_k f||_{ L^{q}(\cgnk)} \le \Om_k (n)\, ||f||_{L^{p}(\rn)}.\ee
The equality sign in (\ref{scamm})  holds if and only if
\be\label{scamm1} f(x)=c\, (1+|Mx|^2)^{-(k+1)/2},\ee
where $c=\const$ and $M$ is an  invertible affine map.
\end{theorem}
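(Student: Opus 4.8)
The plan is to prove both the inequality and the rigidity statement by a \emph{competing symmetries} argument \`a la Carlen--Loss, exploiting the fact that the pair $p=(n+1)/(k+1)$, $q=n+1$ is the unique exponent pair on the critical scaling line (\ref{scal2}) at which the problem acquires, beyond the affine symmetry, a genuine \emph{conformal} symmetry. The key structural observation is that the conjectured extremizers $(1+|Mx|^2)^{-(k+1)/2}$ constitute exactly the orbit of the standard bubble $f_0(x)=(1+|x|^2)^{-(k+1)/2}$ under the group of invertible affine maps of $\rn$ (a general positive-definite quadratic $1+|Ax+b|^2$ absorbs the linear part, the translation and the dilation). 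Hence it suffices to prove that $f_0$ is extremal, that equality forces $f$ into its affine orbit, and then to evaluate the constant on $f_0$.

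First I would record the covariance relations. For $g\in GL(n)$ and $f_g(x)=f(gx)$, a change of variables on each $k$-plane gives $(R_k f_g)(\tau)=J_k(g,\tau)^{-1}(R_k f)(g\tau)$, where $J_k(g,\tau)$ is the factor by which $g$ dilates $k$-dimensional volume on $\tau$; parallel identities hold for translations and dilations. A direct computation of how the measure $d\tau=d_*\xi\,du$ transforms then shows that, precisely when $q=n+1$ and $p=(n+1)/(k+1)$, the Jacobian factors entering $\|R_k f_g\|_{q}$ and $\|f_g\|_{p}$ balance, so that the ratio $\|R_k f\|_{q}/\|f\|_{p}$ is invariant under the whole affine group. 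This both produces the affine orbit of $f_0$ from $f_0$ and reduces the theorem to showing $\|R_k f\|_{q}\le \Om_k(n)\,\|f\|_{p}$ with equality iff $f$ is a scalar multiple of $f_0$ (up to the affine symmetry).

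The heart of the argument is the rearrangement step, which I would run after transporting the inequality to $S^{n}$ by stereographic projection: $k$-planes correspond to $k$-spheres through the pole, the weight $f_0$ corresponds to the constant function, and the induced operator becomes covariant under the M\"obius group of $S^{n}$, whose non-compact part supplies a one-parameter family of conformal dilations $U_s$ exactly preserving the functional $\Phi(f)=\int_{\cgnk}(R_k f)^{n+1}\,d\tau$. On the analytic side I would establish the Riesz-type inequality $\Phi(f)\le \Phi(f^{*})$ for the symmetric decreasing rearrangement $f^{*}$: expanding the $(n+1)$-st power writes $\Phi(f)$ as the integral of $\prod_{i=1}^{n+1}f(x_i)$ against a rotation- and reflection-invariant geometric kernel supported on $(n+1)$-tuples lying in a common $k$-plane, to which a Brascamp--Lieb--Luttinger-type rearrangement argument applies, with strict inequality unless $f$ is already radial decreasing about a point. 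Since rearrangement never decreases $\Phi$ while each $U_s$ preserves it, iterating $f\mapsto (U_s f)^{*}$ drives every admissible $f$ to the constant on $S^{n}$, showing the constant is the maximizer; the strict monotonicity of the rearrangement step together with the equality analysis of Brascamp--Lieb--Luttinger forces every extremizer into the M\"obius orbit of the constant, which pulls back to the affine orbit of $f_0$.

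Finally I would evaluate the sharp constant on the extremizer. Restricting $f_0$ to a $k$-plane at distance $t=|\tau|$ from the origin and integrating in the $k$ on-plane variables reduces, after the substitution $y=\sqrt{1+t^{2}}\,z$, to a Beta integral, giving $(R_k f_0)(\tau)=\pi^{(k+1)/2}\,\Gam((k+1)/2)^{-1}\,(1+|\tau|^{2})^{-1/2}$. Computing $\|R_k f_0\|_{n+1}^{n+1}$ over $\cgnk$ (using $\int_{\gnk}d_*\xi=1$ and an integral over $\xi^{\perp}\cong\rnk$) and $\|f_0\|_{p}^{p}$ over $\rn$ as further Beta integrals and forming the ratio yields exactly $\Om_k(n)=(2^{k-n}\,\sig_k^{n}/\sig_n^{k})^{1/(n+1)}$. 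The main obstacle is the rearrangement step above: proving $\Phi(f)\le\Phi(f^{*})$ for the \emph{singular} $k$-plane kernel, and extracting from its equality case the precise rigidity needed to identify the extremizers, is the genuinely hard analytic input, the remaining steps being covariance bookkeeping and explicit Gamma-function computations.
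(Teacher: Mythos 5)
You should be aware at the outset that the paper contains \emph{no proof} of Theorem \ref{OSBLCDF}: it is a survey statement, and the discussion immediately after it attributes the estimate (\ref{scamm}) (with unspecified constant) to Oberlin--Stein \cite{OS}, Christ \cite{Chr84} and Drury \cite{Dru83, Dru84, Dru89}, the sharp constant $\Om_k(n)$ to Drouot \cite{Dro14}, and the identification of the extremizers (\ref{scamm1}) to Christ \cite{Chr14} (for $k=n-1$) and Drouot--Flock \cite{Dro15, Fl16} (for all $k$). So there is no internal argument to compare yours against; the only related pieces the paper actually proves are (a) the lemma following Conjecture \ref{conj}, whose case $j=0$ is precisely your closing computation ($(R_kf_0)(\tau)=\frac{\sig_k}{2}(1+|\tau|^2)^{-1/2}$ for $f_0(x)=(1+|x|^2)^{-(k+1)/2}$, giving $||R_kf_0||_{n+1}=\Om_k(n)\,||f_0||_{(n+1)/(k+1)}$ and hence the lower bound $||R_k||\ge \Om_k(n)$), and (b) the sphere--Grassmannian transference of Lemma \ref{mlio} and Theorem \ref{lonm3p0}, which is a rigorous version of your ``transport to $S^n$'' step. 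Those parts of your proposal are correct, as is your observation that the ratio $||R_kf||_q/||f||_p$ is affinely invariant exactly at $q=n+1$, $p=(n+1)/(k+1)$, so that (\ref{scamm1}) is the affine orbit of $f_0$.

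The genuine gap is that the whole analytic content of the theorem sits in the one step you postulate rather than prove: the rearrangement inequality $\Phi(f)\le\Phi(f^*)$ for $\Phi(f)=\int_{\cgnk}(R_kf)^{n+1}\,d\tau$, its \emph{strictness}, and the convergence of the competing-symmetries iteration. Expanding $\Phi$ as an $(n+1)$-linear form is fine, but the resulting kernel is a singular measure carried by the set of $(n+1)$-tuples lying on a common $k$-plane. Brascamp--Lieb--Luttinger applies to such degenerate kernels only through an approximation argument, and strict-inequality/equality information does not survive such limits: equality analysis for Riesz-type inequalities requires nondegeneracy hypotheses (in the spirit of Burchard's work) that fail here. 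This is precisely why the equality case required separate, difficult papers --- Christ \cite{Chr14} for $k=n-1$ and Flock \cite{Fl16} for general $k$ --- and why the sharp constant itself required Drouot \cite{Dro14}; none of these can be absorbed into a one-line appeal to ``strict BLL.'' Your plan is, in essence, the Baernstein--Loss program \cite{BL}, which its authors formulated but could not carry out in general; likewise the Carlen--Loss iteration $f\mapsto (U_sf)^*$ needs a compactness/convergence argument you do not supply. In sum: correct covariance bookkeeping, correct reduction to the sphere, correct evaluation of $\Om_k(n)$ on $f_0$, but the core inequality and its rigidity are assumed, so the proposal is a program rather than a proof.
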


This remarkable theorem combines results of several authors. The estimate (\ref{scamm}) with  unspecified numeric constant  is due to Oberlin and  Stein \cite{OS}, Christ \cite{Chr84}, and Drury \cite {Dru83, Dru84, Dru89}.  The question of optimal constant and extremizers  in (\ref{scamm}) was first considered by Baernstein and  Loss  \cite{BL} who  made several important conjectures regarding (\ref{scamm}) and related problems. A sharp constant  $\Om_k (n)$ was found by Drouot \cite{Dro14}. The equality case in (\ref{scamm}) for $k=n-1$ was studied by Christ \cite{Chr14}. The  choice of the extremizer  (\ref{scamm1}) for all $0<k<n$ was justified by  Drouot \cite{Dro14, Dro15} and Flock \cite{Fl16}.

The estimate  (\ref{scamm}) extends to all $1\le p\le (n+1)/(k+1)$ by interpolation and the bound $(n+1)/(k+1)$ is sharp.

To the best of our knowledge, a complete analogue of Theorem \ref{OSBLCDF} for $(j,k)$-transforms  is  unknown, and we formulate the following conjecture.

\begin{conjecture} \label {conj}
Let $\, 0\le j<k<n$,
\[p=\frac{n+1}{k+1}, \quad q=\frac{n+1}{j+1}, \quad \Om_{j,k} (n) =\left (\sig_j^{k-n}\, \sig_k^{n-j}\, \sig_n^{j-k}\,\right )^{1/(n+1)}.\]
Then
\be\label{scamm2} || R_{j,k} f||_{ L^{q}(\cgnk)} \le \Om_{j,k} (n)\, ||f||_{L^{p}(\cgnj)} \ee
where
\be\label{dryh}  \Om_{j,k} (n) =\left ( \sig_k^{n-j}\,\sig_j^{k-n}\, \sig_n^{j-k}\,\right )^{1/(n+1)}\ee
is the norm of the operator $R_{j,k}$.
The equality sign in (\ref{scamm2}) holds if and only if
\be\label{scamm3} f(\z)=c\, (1+|M\z|^2)^{-(k+1)/2},\qquad \z\in \cgnj,\ee
where $c=\const$  and $M$ is an  invertible affine map.
\end{conjecture}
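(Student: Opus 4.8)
The plan is to follow the architecture of the proof of Theorem \ref{OSBLCDF}, to which the conjecture reduces when $j=0$, and to isolate the single new ingredient that is genuinely missing once $j\ge 1$. Three structural features should be exploited at the outset. First, the exponents are forced: a direct computation shows that the pair $1/p=(k+1)/(n+1)$, $1/q=(j+1)/(n+1)$ satisfies the scaling relation (\ref{scal2}), so $(p,q)$ lies on the critical line and the inequality is invariant under dilations. Second, one may restrict to nonnegative $f$, since $|R_{j,k}f|\le R_{j,k}|f|$ pointwise. Third, and decisively, at these critical exponents the ratio $||R_{j,k}f||_q/||f||_p$ is invariant under the full group of invertible affine maps $M$ acting compatibly on $\cgnj$ and $\cgnk$; this is precisely the symmetry whose orbit generates the conjectured family of extremizers (\ref{scamm3}).

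The natural device for converting this affine invariance into a compact, tractable problem is the conformal compactification of Section 3. Under stereographic projection, affine $j$-planes in $\rn$ correspond to $(j+1)$-dimensional linear subspaces of $\rnn$, so functions on $\cgnj$ and $\cgnk$ are carried to functions on the compact Grassmannians $G_{n+1,j+1}$ and $G_{n+1,k+1}$, and $R_{j,k}$ is conjugated to an $O(n+1)$-invariant Funk-type transform $\tilde R$. The point of working at the critical exponents is that the conformal Jacobians of the correspondence produce exactly the weights that turn $||f||_p$ and $||R_{j,k}f||_q$ into the invariant $L^p$ and $L^q$ norms on $G_{n+1,j+1}$ and $G_{n+1,k+1}$; correspondingly, the conjectured extremizer (\ref{scamm3}) is carried to the \emph{constant} function. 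Granting that the constant is extremal for $\tilde R$, its norm equals the value of $\tilde R$ on $\mathbf 1$ (a constant, by $O(n+1)$-invariance), whose evaluation is a normalization identity of the type (\ref{eq1})--(\ref{eq3}); one then checks that, after restoring the conformal weights, it equals $\Om_{j,k}(n)$, the case $j=0$ recovering $\Om_k(n)$ of Theorem \ref{OSBLCDF} via $\sig_0=2$.

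This reduces everything to one hard analytic statement: on $G_{n+1,k+1}$ the constant function uniquely maximizes $||\tilde R f||_q/||f||_p$ among nonnegative $f$. For $j=0$ this is the core of the work of Drouot \cite{Dro14, Dro15}, Christ \cite{Chr14}, and Flock \cite{Fl16}, achieved by pairing the rotation invariance with the noncompact conformal symmetry in a competing-symmetries scheme, whose engine is a rearrangement inequality of Baernstein--Loss type \cite{BL} for the point-to-plane transform. The principal obstacle for $j\ge 1$ is that the source space is no longer $\rn$ but the Grassmannian $G_{n+1,j+1}$, on which no analogue of radial or Schwarz symmetrization is available, and the two-sided symmetry group mixing rotations with the affine action is correspondingly more rigid. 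Supplying a suitable rearrangement or symmetrization inequality for the Funk transform between two nontrivial compact Grassmannians---or, failing that, a direct variational argument identifying the constant as the unique critical point of the ratio---is where the real difficulty lies, and is presumably why the statement remains conjectural. As a structural check, the composition identity $R_{j,k}R_{0,j}=c\,R_k$ reproduces the multiplicative relation $\om_{k,1,\mu}=\om_{j,k,1,\mu}\,\om_{0,j,1,\mu}$ among the $p=1$ constants (\ref{eq2})--(\ref{eq3}) and so confirms the normalization; note, however, that this identity does not act at the critical exponents, and the extremal constants $\Om_{j,k}(n)$ themselves do not factor, so it cannot by itself settle the extremal case.
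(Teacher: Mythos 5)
You have not proved the statement, and you could not have: what you were handed is Conjecture \ref{conj}, which the paper itself leaves open. What the paper does supply is exactly the two ingredients your proposal reconstructs. First, the lemma immediately following the conjecture computes $R_{j,k}f_0$ explicitly for $f_0(\z)=(1+|\z|^2)^{-(k+1)/2}$ and obtains the equality (\ref{dryh0}), hence the lower bound $||R_{j,k}||\ge \Om_{j,k}(n)$ of (\ref{dryhh}); your normalization checks (the scaling relation forcing $p$ and $q$, the identification $\Om_{0,k}(n)=\Om_k(n)$ via $\sig_0=2$, the evaluation of the compactified transform on the constant function) are all correct and consistent with this. Second, Theorem \ref{lonm3p0} and Corollary \ref{corcon} carry out precisely your stereographic reduction: at the critical exponents the conformal weights convert the affine $L^p$ and $L^q$ norms into the invariant norms on $G_{n+1,j+1}$ and $G_{n+1,k+1}$, giving $\Om_{j,k}(n)\le ||R_{j,k}||\le \Om_{j,k}(n)\,||F_{j+1,k+1}||$, so that the norm identity in the conjecture is equivalent to $||F_{j+1,k+1}||=1$, with the conjectured extremizers corresponding to constants. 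In substance, your proposal rederives the paper's own Section 3.

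The genuine gap is the one you concede in your last paragraph: the assertion that the constant function is the (unique, among nonnegative functions) maximizer of $||F_{j+1,k+1}f||_q/||f||_p$ for a Funk-type transform between two nontrivial compact Grassmannians. For $j=0$ this is the Drouot--Christ--Flock theorem, whose proof (competing symmetries driven by a Baernstein--Loss type rearrangement) depends on the source space being $\rn$, where Schwarz symmetrization exists; for $j\ge 1$ the source is $G_{n+1,j+1}$ and no substitute symmetrization is known. Reducing an open conjecture to an equivalent open statement, however cleanly, is not a proof; note also that your fallback suggestion of a ``direct variational argument'' would at best identify critical points of the ratio, not certify a global maximum or the uniqueness claimed in (\ref{scamm3}), and, as you yourself observe, the composition identity $R_{j,k}R_{0,j}=R_k$ carries no information at the critical exponents. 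So: everything in your write-up that can be verified is correct and matches the paper's partial results, but the conjecture is exactly as open at the end of your argument as at the beginning.
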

Clearly, for  $j=0$ we have $\Om_{0,k} (n)= \Om_{k} (n)$, as in (\ref{scamm}).

The boundedness of $R_{j,k}$ from  $L^{(n+1)/(k+1)}(\cgnj)$ to $L^{(n+1)/(j+1)}(\cgnk)$ was stated by  Drury in \cite [Theorem 2]{Dru89a}, \cite [formula (12)]{Dru89}. Conjecture \ref{conj} is partly motivated by the following lemma, according to which
\be\label{dryhh}
|| R_{j,k} || \ge  \Om_{j,k} (n).\ee

\begin{lemma}
 If $f_0(\z)=(1+|\z|^2)^{-(k+1)/2}$, $\z\in \cgnj$, then
\be\label{dryh0} || R_{j,k} f_0||_{ L^{q}(\cgnk)} = \Om_{j,k} (n)\, ||f_0||_{L^{p}(\cgnj)}, \ee
where all parameters have the same meaning as in Conjecture \ref{conj}.
\end{lemma}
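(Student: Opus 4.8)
The plan is to compute both sides of \eqref{dryh0} explicitly, exploiting the fact that $f_0$ depends only on the distance $|\z|$ from the origin. By the $O(n)$-equivariance of $R_{j,k}$, the image $R_{j,k}f_0$ is again radial, i.e. a function of $|\t|$ alone, so the whole problem reduces to two one-dimensional beta integrals followed by a gamma-function identity.

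First I would evaluate $(R_{j,k}f_0)(\t)$ for a fixed $k$-plane $\t$ at distance $t=|\t|$ from the origin. Writing $\t=\xi+u$ with $\xi\in\gnk$ and $u\in\xi^\perp$, $|u|=t$, every $j$-plane $\z\subset\t$ is determined by its direction $\eta$ (a $j$-dimensional subspace of $\xi$) and an offset $v\in\eta^\perp\cap\xi\cong\bbr^{k-j}$. Since $u\perp\xi$ and $v\perp\eta$, the foot of the perpendicular from the origin to $\z$ obeys the Pythagorean relation $|\z|^2=|u|^2+|v|^2=t^2+|v|^2$, which is independent of $\eta$. The canonical measure on the $j$-planes in $\t$ factors as $d_\t\z=d_*\eta\,dv$, with $d_*\eta$ the invariant probability measure on the Grassmannian of $j$-subspaces of $\xi$ and $dv$ Lebesgue measure on $\bbr^{k-j}$; integrating out $\eta$ contributes only a factor $1$. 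Hence
\[(R_{j,k}f_0)(\t)=\intl_{\bbr^{k-j}}(1+t^2+|v|^2)^{-(k+1)/2}\,dv.\]
Rescaling $v\mapsto (1+t^2)^{1/2}v$ and using $\int_{\bbr^m}(1+|v|^2)^{-s}\,dv=\pi^{m/2}\Gam(s-m/2)/\Gam(s)$ with $m=k-j$, $s=(k+1)/2$, I obtain
\[(R_{j,k}f_0)(\t)=c_1\,(1+|\t|^2)^{-(j+1)/2},\qquad c_1=\frac{\pi^{(k-j)/2}\,\Gam((j+1)/2)}{\Gam((k+1)/2)}.\]

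Next I would compute the two norms. The exponents are chosen precisely so that both integrands become $(1+|\cdot|^2)^{-(n+1)/2}$: from $p=(n+1)/(k+1)$ one gets $(k+1)p/2=(n+1)/2$, and from $q=(n+1)/(j+1)$ one gets $(j+1)q/2=(n+1)/2$. Integrating over $\cgnj$ and $\cgnk$ (where again the Grassmannian carries a probability measure, so only the radial part survives) and applying the same beta integral with $m=n-j$ and $m=n-k$ yields closed forms for $\|f_0\|_{L^p(\cgnj)}^p$ and for $\|R_{j,k}f_0\|_{L^q(\cgnk)}^q$, each a product of powers of $\pi$ and gamma functions. Finally, forming the ratio $\|R_{j,k}f_0\|_{L^q}/\|f_0\|_{L^p}$ and substituting $1/p=(k+1)/(n+1)$, $1/q=(j+1)/(n+1)$, it remains to verify that all powers of $\pi$ cancel and that the surviving gamma factors reorganize, via $\sig_m=2\pi^{(m+1)/2}/\Gam((m+1)/2)$, into $(\sig_k^{n-j}\sig_j^{k-n}\sig_n^{j-k})^{1/(n+1)}=\Om_{j,k}(n)$. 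The conceptual crux is the geometric reduction in the second step — establishing the identity $|\z|^2=t^2+|v|^2$ and the correct normalization of $d_\t\z$ so that the direction integral drops out; once these are secured, what remains is routine but bookkeeping-heavy gamma-function algebra, which I expect to be the only place where exponent errors can creep in.
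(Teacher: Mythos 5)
Your proof is correct and follows essentially the same route as the paper's: both reduce the problem to the formula $(R_{j,k}f_0)(\tau)=(\sigma_k/\sigma_j)\,(1+|\tau|^2)^{-(j+1)/2}$ and then evaluate the two norms as one-dimensional beta integrals, the exponents $p=(n+1)/(k+1)$ and $q=(n+1)/(j+1)$ turning both integrands into $(1+|\cdot|^2)^{-(n+1)/2}$, after which the gamma factors reassemble into $\Omega_{j,k}(n)$. The only difference is that the paper quotes the key formula for $R_{j,k}f_0$ from \cite[formula (2.12)]{Ru04d}, whereas you derive it yourself via the Pythagorean identity $|\zeta|^2=|\tau|^2+|v|^2$ and the factorization $d_\tau\zeta=d_*\eta\,dv$ --- a correct, self-contained substitute for that citation.
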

\begin{proof} By \cite [formula (2.12)]{Ru04d},
$ (R_{j,k} f_0)(\t)=\lam\, (1+|\t|^2)^{-(j+1)/2}$, where
\[\lam=\frac{\pi^{(k-j)/2}\, \Gam ((j+1)/2)}{\Gam ((k+1)/2)}=\frac{\sig_k}{ \sig_j}.\]
Note also that
\[ ||f_0||^{p}_{L^{p}(\cgnj)}=\intl_{\cgnj} (1+|\z|^2)^{-(n+1)/2}\, d\z=\sig_{n-j-1}  \intl_{0}^\infty
\frac{t^{n -j-1}\,dt}{(1+t^2)^{(n+1)/2}}\,  dt=\frac{\sig_n}{ \sig_j}\]
(use, e.g.,   \cite [formula 3.196 (2)]{GRy}). Similarly,
\[|| R_{j,k} f_0||^{q}_{ L^{q}(\cgnk)} =\lam^{q}\intl_{\cgnk} (1+|\t|^2)^{-(n+1)/2}\, d\z=\lam^{q}\, \frac{\sig_n}{ \sig_k}=\left (\frac{\sig_k}{ \sig_j}\right )^{q}\, \frac{\sig_n}{ \sig_k}.\]
This gives (\ref{dryh0}).
\end{proof}

\begin{remark} The following intriguing  inequality for the $k$-plane transform  due to Dann, Paouris and Pivovarov \cite[Theorem 1.3]{DPP} might be a nice addition to this subsection.

\begin{theorem}  If $f$ is a nonnegative bounded integrable function on $\rn$, $0<k<n$, then
\be\label {opsa} \intl_{\cgnk}  [(R_k f)(\t)]^{n+1}\, \frac {d\t} { || f|_\t ||^{n-k}_{\infty} } \le  \frac{b_k^{n+1}\, b_{n(k+1)}}{b_n^{k+1} \,  b_{k(n+1)} }\, ||f||_1^{k+1}, \ee
where $f|_\t$ is the restriction of $f$ onto $\t$.
\end{theorem}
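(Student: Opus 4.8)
The plan is to combine the affine Blaschke--Petkantschin formula with a rearrangement inequality of Busemann type, and then to read off the sharp constant from the equality case $f=\chi_{B_n}$. Since $n+1=(k+1)+(n-k)$, I first expand the $(k+1)$-st power of the transform as a multiple integral over $\t$,
\[
(R_kf)(\t)^{k+1}=\intl_{\t^{k+1}}\prod_{i=0}^{k}f(x_i)\,d_\t x_0\cdots d_\t x_k .
\]
Let $\Delta=\Delta(x_0,\dots,x_k)$ be $k!$ times the $k$-dimensional volume of the simplex with vertices $x_0,\dots,x_k$. The affine Blaschke--Petkantschin formula (see, e.g., Schneider and Weil \cite{SchW}) gives, for a suitable constant $c_{n,k}>0$,
\[
||f||_1^{k+1}=\intl_{(\rn)^{k+1}}\prod_{i=0}^{k}f(x_i)\,dx_0\cdots dx_k
=c_{n,k}\intl_{\cgnk}\Big(\,\intl_{\t^{k+1}}\prod_{i=0}^{k}f(x_i)\,\Delta^{\,n-k}\,d_\t x_0\cdots d_\t x_k\Big)d\t .
\]
Hence it suffices to bound the inner integral from below, for each fixed $\t$, by a constant multiple of $(R_kf)(\t)^{n+1}\big/||f|_\t||_\infty^{n-k}$.

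Writing $g=f|_\t\ge0$ on $\t\cong\rk$, the required estimate is
\[
\intl_{\t^{k+1}}\prod_{i=0}^{k}g(x_i)\,\Delta^{\,n-k}\,d_\t x_0\cdots d_\t x_k\ \ge\ \kappa_{n,k}\,\frac{\big(\intl_\t g\,d_\t x\big)^{n+1}}{||g||_\infty^{\,n-k}} .
\]
Setting $h=g/||g||_\infty\in[0,1]$ and using $\Delta(\lambda x_0,\dots,\lambda x_k)=\lambda^k\Delta(x_0,\dots,x_k)$, one checks that both sides transform identically under dilations $x\mapsto\lambda x$ (each scales like $\lambda^{k(n+1)}$), so the estimate is equivalent to the scale-invariant inequality
\[
\intl_{(\rk)^{k+1}}\prod_{i=0}^{k}h(x_i)\,\Delta^{\,n-k}\,dx_0\cdots dx_k\ \ge\ \kappa_{n,k}\Big(\intl_{\rk}h\,dx\Big)^{n+1},\qquad 0\le h\le1 .
\]
This is a functional version of the Busemann random simplex inequality: among all $h$ with $0\le h\le1$ and prescribed integral, the left-hand functional is minimized when $h$ is the indicator of a centered Euclidean ball; equivalently, the normalized simplex moment $|K|^{-(n+1)}\intl_{K^{k+1}}\Delta^{\,n-k}$ is smallest for $K$ a ball. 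This rearrangement statement is the crux of the argument and is precisely the mechanism of Dann, Paouris and Pivovarov \cite{DPP}; see also \cite{Sc13, Ga06, Bu}. With $\kappa_{n,k}$ taken to be this minimal (ball) value, the estimate holds for every $g$.

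Combining the two steps yields $||f||_1^{k+1}\ge c_{n,k}\,\kappa_{n,k}\,I$, where $I$ is the left-hand side of (\ref{opsa}); thus (\ref{opsa}) holds with constant $1/(c_{n,k}\kappa_{n,k})$. To evaluate it, observe that the rearrangement step becomes an equality for $f=\chi_{B_n}$, because each slice $\chi_{B_n}|_\t$ is the indicator of a $k$-ball; as the Blaschke--Petkantschin step is an identity, $f=\chi_{B_n}$ realizes equality throughout, so the sharp constant equals the ratio of the two sides of (\ref{opsa}) for this $f$. Parameterizing $\t=(\xi,u)$ with $u\in\xi^\perp$, one has $(R_k\chi_{B_n})(\t)=V_k(B_n\cap\t)=b_k(1-|u|^2)^{k/2}$ and $||\chi_{B_n}|_\t||_\infty=1$ for $|u|<1$ (both vanish otherwise), so the denominator is $1$ on the relevant set and, integrating out $\xi\in\gnk$,
\[
\intl_{\cgnk}(R_k\chi_{B_n})(\t)^{n+1}\,d\t=b_k^{\,n+1}\intl_{\{u\in\rnk:\,|u|<1\}}(1-|u|^2)^{k(n+1)/2}\,du .
\]
Polar coordinates in $\rnk$, the Beta integral $\int_0^1(1-r^2)^{a}r^{\,b-1}\,dr=\tfrac12 B(b/2,a+1)$, and $b_m=\pi^{m/2}/\Gamma(m/2+1)$ reduce this to $b_k^{\,n+1}b_{n(k+1)}/b_{k(n+1)}$. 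Since $||\chi_{B_n}||_1^{k+1}=b_n^{k+1}$, the ratio is exactly $b_k^{n+1}b_{n(k+1)}/(b_n^{k+1}b_{k(n+1)})$, the constant in (\ref{opsa}).

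The main obstacle is the rearrangement inequality above. It cannot be reached by a soft convexity or extreme-point argument: already for $k=1$ the kernel is $|x_0-x_1|^{n-1}$, which is not conditionally negative definite once $n-1\ge2$, so the functional need not be concave on the constraint set. A genuinely geometric argument --- Steiner symmetrization, under which the functional decreases toward the ball, as carried out in \cite{DPP} --- is needed. The two remaining ingredients, the Blaschke--Petkantschin identity and the explicit evaluation on the ball, are routine.
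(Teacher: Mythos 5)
Your proposal is mathematically sound, but note an unusual feature of the comparison: the paper does not prove this theorem at all. It appears inside a remark as a direct quotation of \cite[Theorem 1.3]{DPP}, so there is no internal proof to measure your argument against; the natural benchmark is the original proof of Dann, Paouris and Pivovarov, and your reconstruction follows it faithfully. Your three steps --- the affine Blaschke--Petkantschin factorization of $\|f\|_1^{k+1}$, the slice-wise functional Busemann random simplex inequality for $0\le h\le 1$, and the identification of the constant by testing on $f=\chi_{B_n}$, whose every slice is the indicator of a $k$-ball --- are exactly the mechanism of \cite{DPP}. The details check out: both sides of the slice inequality are homogeneous of degree $k+1$ in $g$, so the reduction to $0\le h\le 1$ is legitimate, and the evaluation
\[
\intl_{\{u\in\rnk:\,|u|<1\}}(1-|u|^2)^{k(n+1)/2}\,du
=\pi^{(n-k)/2}\,\frac{\Gam\bigl(\tfrac{k(n+1)}{2}+1\bigr)}{\Gam\bigl(\tfrac{n(k+1)}{2}+1\bigr)}
=\frac{b_{n(k+1)}}{b_{k(n+1)}}
\]
(using $n(k+1)-k(n+1)=n-k$) gives precisely the constant $b_k^{n+1}b_{n(k+1)}/(b_n^{k+1}b_{k(n+1)})$; testing on a simultaneous extremizer of both intermediate steps pins down the product $1/(c_{n,k}\kappa_{n,k})$ without ever computing $c_{n,k}$ or $\kappa_{n,k}$ separately, which is a clean device. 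The one caveat is that the crux --- the rearrangement statement that $\int\prod_i h(x_i)\,\Delta^{n-k}$ is minimized among $0\le h\le 1$ of prescribed integral by ball indicators --- is cited rather than proved, and cited to the very paper whose theorem you are establishing. This is not circular (that symmetrization lemma does not depend on the theorem), but it does mean your text is a reduction of the theorem to the key lemma of \cite{DPP} rather than a self-contained proof; you are explicit and correct about this, including your observation that no soft convexity or extreme-point argument can substitute for the Steiner-symmetrization step.
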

\end{remark}

\subsection{Volume Estimates for Cross-Sections of Sets in $\rn$}

Inequalities for Radon transforms in the previous subsections give rise to a series of estimates
 for cross-sections of measurable subsets of $\rn$. The only restriction on these subsets is the finiteness of the  right-hand side in the corresponding estimates.  The estimates  contain several parameters that can be chosen depending on our need.

 \subsubsection {Weighted Estimates}  Given a measurable set $S$ in $\rn$, let $\chi_S (x)$ be the indicator function of $S$, that is, $\chi_S (x)\equiv 1$ if $x\in S$ and $\chi_S (x)\equiv 0$ if $x\notin S$. Then, for $\t \in \cgnk$, we have $V_k (S\cap \t)=(R_k\chi_S)(\tau)$, and  Theorem \ref{kk5cds} yields
 the following statement.

 \begin{proposition}   Let
 \[0< k< n, \quad 1\le p \le \infty,  \quad \nu=\mu-k/p', \quad   \quad \mu>k-n/p.\]
  Then
\be\label {op1}  \intl_{\cgnk} [V_k (S\cap \t)]^p\, |\t|^{\nu p}\, d\t \le \om^p_{k,p,\mu}(n) \,\intl_{S} |x|^{\mu p}\, dx,\ee
where $\om_{k,p,\mu}(n)$ is defined by (\ref{3458}), so that
\be\label {asn}
  \om_{k,p,\mu} (n)=n^{-k/2p'}\, \left( (2\pi)^{k/2p'}\, p^{k/2} +o(1) \right), \qquad n\to \infty.\ee
   \end {proposition}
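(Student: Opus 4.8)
The plan is to derive the Proposition directly from Theorem~\ref{kk5cds} by specializing the norm inequality (\ref{oopli}) to indicator functions. First I would observe that, as already noted in the text preceding the statement, if $S\subset\rn$ is measurable with indicator $\chi_S$, then for $\t=(\xi,u)\in\cgnk$ the $k$-plane transform evaluates to the cross-sectional volume, namely $(R_k\chi_S)(\t)=\intl_{x\in\t}\chi_S(x)\,d_\tau x=V_k(S\cap\t)$. This is the geometric bridge: the left-hand integrand $[V_k(S\cap\t)]^p\,|\t|^{\nu p}$ is exactly $|\,|\t|^\nu (R_k\chi_S)(\t)|^p$, so the left side of (\ref{op1}) is $||R_k\chi_S||_{p,\nu}^p=||R_k\chi_S||_{L^p_\nu(\cgnk)}^p$.

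Next I would compute the right-hand side in the same language. With $f=\chi_S$ we have, directly from the definition (\ref{eeee}),
\be
||\chi_S||_{L^p_\mu(\bbr^n)}^p=\irn \big||x|^\mu\chi_S(x)\big|^p\,dx=\intl_{S}|x|^{\mu p}\,dx.\nonumber
\ee
Under the stated hypotheses $0<k<n$, $1\le p\le\infty$, $\nu=\mu-k/p'$, $\mu>k-n/p$, the exponents satisfy exactly the assumptions of Theorem~\ref{kk5cds}, so (\ref{oopli}) applies and gives $||R_k\chi_S||_{L^p_\nu(\cgnk)}\le\om_{k,p,\mu}(n)\,||\chi_S||_{L^p_\mu(\bbr^n)}$. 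Raising both sides to the $p$-th power and substituting the two identifications above yields (\ref{op1}) verbatim. For the degenerate endpoint $p=\infty$ one interprets the $p$-th powers and norms in the usual essential-supremum sense; the inequality then reads as a pointwise essential bound and requires no separate argument beyond the convention already in force in Theorem~\ref{kk5cds}.

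Finally, the asymptotic relation (\ref{asn}) is an immediate consequence of the general asymptotics (\ref{as})--(\ref{as1}) established in the comments, taken in the special case $j=0$. Setting $j=0$ there gives $\om_{k,p,\mu}(n)=n^{-k/2p'}(\om^0_{0,k,p}+o(1))$ with $\om^0_{0,k,p}=(2\pi)^{k/2p'}p^{k/2}$, which is precisely (\ref{asn}); alternatively one applies $\Gam(z+a)/\Gam(z+b)\sim z^{a-b}$ as $z\to\infty$ directly to the ratio $\Gam((\mu+n/p-k)/2)/\Gam((\mu+n/p)/2)$ in (\ref{3458}), with $z=(\mu+n/p)/2$, $a=-k/2$, $b=0$, and uses $\Gam(n/2)/\Gam((n-k)/2)\sim (n/2)^{k/2}$ together with the prefactor $\pi^{k/2p'}$.

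There is essentially no obstacle here: the Proposition is a clean corollary, and the only point that warrants care is the verification that the indicator-function specialization keeps both sides finite so that the inequality is non-vacuous — this is guaranteed by the standing assumption that the right-hand side is finite, i.e.\ $\intl_S|x|^{\mu p}\,dx<\infty$. The one substantive remark to record, which the surrounding text already anticipates, is that the sharpness of (\ref{op1}) does not follow from Theorem~\ref{kk5cds} alone but rather from the strengthened statement (\ref{oopli})--(\ref{oopli1}): since the near-extremizer $f_0$ can be taken nonnegative and the inequality (\ref{oopli}) is attained within the cone of nonnegative functions, the constant $\om_{k,p,\mu}(n)$ remains optimal when the test functions are restricted to indicators of sets, which is exactly the class relevant to the geometric inequality.
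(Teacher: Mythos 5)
Your core argument is exactly the paper's: the Proposition is derived in one step from Theorem \ref{kk5cds} after observing that $V_k(S\cap\t)=(R_k\chi_S)(\t)$ and $||\chi_S||^p_{L^p_\mu(\bbr^n)}=\intl_S|x|^{\mu p}\,dx$, and the asymptotics (\ref{asn}) are just the case $j=0$ of (\ref{as})--(\ref{as1}) (your direct gamma-ratio computation checks out as well). That part is correct and complete.

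Your closing remark about sharpness, however, is wrong, and it contradicts the paper. You claim that because (\ref{oopli}) is saturated within the cone of nonnegative functions (via the $f_0$ of (\ref{oopli1})), the constant $\om_{k,p,\mu}(n)$ ``remains optimal when the test functions are restricted to indicators of sets.'' Optimality over a class does not descend to optimality over a proper subclass: the near-extremizer $f_0$ is a nonnegative function that is not an indicator, so nothing forces the supremum of $||R_k\chi_S||_{p,\nu}/||\chi_S||_{p,\mu}$ over measurable sets $S$ to equal $\om_{k,p,\mu}(n)$. The paper states precisely the opposite in the Remark containing Theorem \ref{KIOP}: apparently the weighted inequalities (\ref{op1}) and (\ref{op3}) are \emph{not} sharp, because the subclass of indicator functions is much smaller than the class of functions in Theorems \ref{kk5cds} and \ref{kk5cdj}. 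The setting where nonnegative-function sharpness \emph{does} transfer is Subsection \ref{Weig}: there the test objects are radial functions $\rho_L$ of star sets, and every nonnegative function on $\sn$ arises as such a radial function, so the subclass is not proper. Your addendum erases exactly this distinction. Since the Proposition asserts only the inequality and the asymptotics, your proof of the stated result stands; simply delete the sharpness claim.
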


   Note that the principal term of the asymptotics of $\om_{k,p,\mu} (n)$ is independent of $\mu$. The case $\mu=0$ gives the following corollary.
\begin{corollary}  If $1\le p<n/k$, then
\be\label {op3}
\intl_{\cgnk}  [V_k (S\cap \t)]^p\,  |\t|^{-k(p-1)}\, d\t \le \om^p_{k,p,0}(n) \, V_n (S), \ee
   \[\om_{k,p,0}(n)  = \pi^{k/2p'}\,\left (\frac{ \Gam(n/2)}{\Gam((n-k)/2)}\right )^{1/p}\, \frac{\Gam ((n/p -k)/2)}{\Gam (n/2p)}.\]
\end{corollary}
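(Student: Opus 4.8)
The plan is to derive the Corollary as a direct specialization of the Proposition by setting the free parameter $\mu=0$, so the main task is to verify that all hypotheses of the Proposition remain satisfied under this choice and that the resulting formulas simplify as stated. First I would check the admissibility condition $\mu>k-n/p$: with $\mu=0$ this reads $0>k-n/p$, i.e. $n/p>k$, equivalently $p<n/k$, which is exactly the range claimed in the Corollary. Since we also need $1\le p$, the combined hypothesis $1\le p<n/k$ is precisely what guarantees that the Proposition applies with $\mu=0$.

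Next I would compute the exponent on the weight $|\t|$. The Proposition uses $\nu=\mu-k/p'$, so with $\mu=0$ we get $\nu=-k/p'=-k(1-1/p)=-k(p-1)/p$. The left-hand side of (\ref{op1}) carries the factor $|\t|^{\nu p}$, and here $\nu p=-k(p-1)$, which matches the exponent $-k(p-1)$ appearing in (\ref{op3}). On the right-hand side, the integrand $|x|^{\mu p}$ becomes $|x|^0=1$, so $\int_S |x|^{\mu p}\,dx$ collapses to $\int_S dx = V_n(S)$, giving the volume of $S$ as claimed.

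Finally I would specialize the constant $\om_{k,p,\mu}(n)$ from (\ref{3458}) at $\mu=0$. The general formula is
\[
\om_{k,p,\mu}(n)=\pi^{k/2p'}\,\left(\frac{\Gam(n/2)}{\Gam((n-k)/2)}\right)^{1/p}\,\frac{\Gam((\mu+n/p-k)/2)}{\Gam((\mu+n/p)/2)},
\]
and substituting $\mu=0$ replaces the two gamma arguments by $(n/p-k)/2$ and $(n/p)/2=n/2p$ respectively, yielding
\[
\om_{k,p,0}(n)=\pi^{k/2p'}\,\left(\frac{\Gam(n/2)}{\Gam((n-k)/2)}\right)^{1/p}\,\frac{\Gam((n/p-k)/2)}{\Gam(n/2p)},
\]
exactly the displayed constant in the Corollary. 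Raising to the $p$-th power as in (\ref{op3}) completes the identification.

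Since every step is a mechanical substitution into the already-established Proposition, there is no genuine analytic obstacle here; the only point requiring a moment's care is the bookkeeping for the exponents, in particular confirming that $\nu p=-k(p-1)$ and that the admissibility inequality $\mu>k-n/p$ translates correctly into the clean range $p<n/k$. I would present the argument as a short verification rather than a new proof, explicitly noting that (\ref{op3}) is nothing but (\ref{op1}) evaluated at $\mu=0$.
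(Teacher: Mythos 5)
Your proposal is correct and is exactly the paper's route: the corollary is stated there as the immediate specialization $\mu=0$ of the Proposition containing (\ref{op1}), with the same bookkeeping ($\nu p=-k(p-1)$, the right-hand side collapsing to $V_n(S)$, and the admissibility condition $\mu>k-n/p$ turning into $1\le p<n/k$). Nothing further is needed.
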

\begin{remark}
   If $p=1$, $\mu>k-n$, then (\ref{eq}) yields an explicit equality
      \be\label {op4}
   \intl_{\cgnk} V_k (S\cap \t)\, |\t|^{\mu}\, d\t = \om_{k,1,\mu}(n) \intl_{S} |x|^{\mu}\, dx,\ee
where $\om_{k,1,\mu}(n)$  is a constant (\ref{eq2}).  In particular, for $\mu=0$,
 \be\label {op5}
  \intl_{\cgnk} V_k (S\cap \t)\,  d\t =V_n (S),\ee
which is a well-known  consequence of  Fubini's theorem.
\end{remark}

  More generally,  assuming $\z\in \cgnj$, $\t\in \cgnk$, $0\le j<k<n$, and setting $f(\z)=(R_j \chi_S)(\z)= V_j (S\cap \z)$ , we obtain
\[(R_{j,k} f)(\t)=(R_{j,k} [ R_j \chi_S])(\t)=(R_k \chi_S)(\t)=V_k (S\cap \t).\]
Now, Theorem \ref{kk5cdj} gives the following inequality for mean volumes of cross-sections of different dimensions:
\be\label {op1as} \intl_{\cgnk}  [V_k (S\cap \t)]^p\, |\t|^{\nu p}\, d\t\le  \om^p_{j,k,p,\mu} (n)\, \intl_{\cgnj} [V_j (S\cap \z)]^{\mu p}\, d\z.\ee
 Here $\mu, \nu, p$ and $  \om_{j,k,p,\mu} (n)$  have the same meaning as in Theorem \ref{kk5cdj}.

 \subsubsection {Unweighted Estimates} The  inequality (\ref{scamm}) yields
\be\label {otra} \intl_{\cgnk}  [V_k (S\cap \t)]^{n+1}\, d\t\le  2^{k-n}\, \frac{\sig_k^n}{\sig_n^k} \, [V_n (S)]^{k+1}. \ee
A more general  inequality follows from (\ref{scamm2}) and has the form
\bea\label {op1ia} &&\Big  (\,\intl_{\cgnk}  [V_k (S\cap \t)]^{(n+1)/(j+1)}\, d\t\Big )^{(j+1)/(n+1)}  \nonumber\\
\label {op1ia} &&\le \Om_{j,k} (n)\, \Big  (\,\intl_{\cgnj} [V_j (S\cap \z)]^{(n+1)/(k+1)}\, d\z\Big )^{(k+1)/(n+1)}\eea
provided that Conjecture \ref{conj} is true.

\begin{remark} Apparently the weighted inequalities (\ref{op1}) and (\ref  {op3}) are not sharp because the subclass of  indicator functions is much smaller than that in Theorems  \ref{kk5cds}  and \ref{kk5cdj}. Regarding unweighted case, the following sharp result  is due to Gardner \cite [Theorem 7.8]{Ga07}. For the sake of convenience, we formulate it in our notation.

\begin{theorem}\label {KIOP} Let $S$ be a bounded Borel set in $\rn$, $1\le k\le n$. Then
\be\label {ogra} \intl_{\cgnk}  [V_k (S\cap \t)]^{n+1}\, d\t\le   \frac{b_k^{n+1}\, b_{n(k+1)}}{b_n^{k+1} \,  b_{k(n+1)} }\, [V_n (S)]^{k+1} \ee
with equality when $k>1$ if and only if $S$ is an $n$-dimensional ellipsoid, modulo a set of measure zero, and when $k=1$ if and only if $S$ is a convex body,  modulo a set of measure zero.
\end{theorem}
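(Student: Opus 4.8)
The plan is to reduce the inequality, via the affine Blaschke--Petkantschin formula, to an estimate inside each plane $\t$ that is supplied by the Busemann random simplex inequality. First, integrating the indicator over $(k+1)$-tuples of points gives at once
\[ \intl_{(\rn)^{k+1}} \chi_S(x_0)\cdots\chi_S(x_k)\, dx_0\cdots dx_k = [V_n(S)]^{k+1}. \]
The affine Blaschke--Petkantschin formula \cite{SchW} rewrites the very same integral as an integral over $\cgnk$: with an explicit constant $c(n,k)$,
\[ [V_n(S)]^{k+1} = c(n,k)\,\icgr \Big(\, \intl_{(S\cap\t)^{k+1}} \Delta_k(x_0,\ldots,x_k)^{n-k}\, d_\t x_0\cdots d_\t x_k\Big)\, d\t, \]
where $\Delta_k(x_0,\ldots,x_k)$ is the $k$-dimensional volume of the simplex spanned by $x_0,\ldots,x_k$ in the plane $\t$. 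This moves the whole problem to bounding the inner ``simplex moment'' from below by a multiple of $[V_k(S\cap\t)]^{n+1}$.

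Next, I would invoke the Busemann random simplex inequality \cite{Ga06} in the $k$-plane $\t\cong\rk$: among bounded measurable sets of a prescribed $k$-volume, the ball minimizes the $(n-k)$-th moment of the volume of the simplex spanned by $k+1$ independent uniform points. By the homogeneity $\Delta_k\mapsto r^k\Delta_k$, $d_\t x\mapsto r^k d_\t x$ under dilation by $r$, the minimal (ball) value of this moment is proportional to the $(n+1)$-st power of the $k$-volume, so the inequality reads
\[ \intl_{(S\cap\t)^{k+1}} \Delta_k^{n-k}\, d_\t x_0\cdots d_\t x_k \ \ge\ \frac{\kappa_{n,k}}{b_k^{\,n+1}}\,[V_k(S\cap\t)]^{n+1},\qquad \kappa_{n,k}=\intl_{(B_k)^{k+1}}\Delta_k^{n-k}. \]
Substituting this bound into the Blaschke--Petkantschin identity and rearranging yields (\ref{ogra}) with constant $b_k^{n+1}/(c(n,k)\,\kappa_{n,k})$. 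It then remains to insert the known closed forms of the Blaschke--Petkantschin constant $c(n,k)$ and of the ball moment $\kappa_{n,k}$ (a Miles-type evaluation) and to check that their product equals $b_n^{k+1}\,b_{k(n+1)}/b_{n(k+1)}$; the appearance of $b_{n(k+1)}$ and $b_{k(n+1)}$ is exactly the fingerprint of these two explicit constants.

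Finally, the equality case. Equality in (\ref{ogra}) forces equality in the Busemann random simplex inequality for almost every $\t\in\cgnk$, hence $S\cap\t$ is, modulo a null set, a $k$-dimensional ellipsoid for almost every $\t$. When $k=1$ a one-dimensional ellipsoid is merely a segment, so almost every line meets $S$ in an interval, which forces $S$ to coincide with a convex body up to measure zero. When $k>1$ one must upgrade the statement ``almost every $k$-section of $S$ is an ellipsoid'' to ``$S$ itself is an ellipsoid,'' which is precisely a characterization-of-ellipsoids-by-sections (false-center--type) theorem.

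I expect the \emph{main obstacle} to be exactly this last upgrade for $k>1$: the ellipsoidal-section characterization is a genuinely nontrivial ingredient and is what splits the statement into the two regimes $k=1$ and $k>1$. A secondary delicate point is to establish the sharp equality conditions of the Busemann inequality for arbitrary bounded Borel sets, rather than convex bodies, and to combine them consistently over almost every plane. By contrast, the evaluation of the constant is mechanical once the two closed forms are available.
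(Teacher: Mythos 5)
You should know at the outset that the paper itself contains no proof of Theorem \ref{KIOP}: it is imported, restated in the paper's notation, from Gardner \cite[Theorem 7.8]{Ga07} (the sentence preceding it says exactly this), so there is no internal argument to compare yours against. What you have written is, however, essentially the proof in the cited source, which in turn follows Schneider's method \cite{Sc85} for convex bodies: the affine Blaschke--Petkantschin formula \cite{SchW} converts $[V_n(S)]^{k+1}$ into an integral over $\cgnk$ of the $(n-k)$-th simplex moment over $(S\cap\tau)^{k+1}$, and the generalized Busemann random simplex inequality --- extended from convex bodies to compact/bounded Borel sets by Pfiefer, which is precisely what Gardner invokes --- bounds that moment from below by $(\kappa_{n,k}/b_k^{\,n+1})\,[V_k(S\cap\tau)]^{n+1}$. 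Your scaling computation is right (both sides scale like $r^{k(n+1)}$, and the bound is an identity when $S\cap\tau=B_k$), the direction of the simplex inequality is right (the ball \emph{minimizes} the moment at fixed volume), and the stated constant is indeed what emerges from Miles' closed-form moments of random simplices in a ball combined with the Blaschke--Petkantschin constant; the factors $b_{n(k+1)}$ and $b_{k(n+1)}$ arise exactly as you predict. Two caveats are worth recording. First, the constant bookkeeping must be done in the paper's normalization of $d\tau$ (probability measure on $\gnk$ times Lebesgue measure on $\xi^\perp$), which differs from the Haar-type normalization used in \cite{SchW}; this is routine but a common source of error. Second, you correctly identified, rather than glossed over, the two genuinely nontrivial ingredients: the equality conditions of the random simplex inequality in the Borel-set setting (due to Pfiefer, not merely the convex-body case), and the measure-theoretic upgrade from ``a.e.\ $k$-plane section of $S$ is an ellipsoid modulo a null set'' to ``$S$ is an ellipsoid modulo a null set'' when $k>1$ (respectively, from ``a.e.\ line section is an interval mod null'' to ``$S$ is convex mod null'' when $k=1$). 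Both are supplied by Gardner in \cite{Ga07}, and they are exactly the source of the dichotomy between $k=1$ and $k>1$ in the equality statement. So, as a blind reconstruction, your proposal is the correct proof with the hard lemmas properly flagged; it is not an alternative route to anything proved in this paper, since the paper proves nothing here.
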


The estimate (\ref{ogra}) is sharper than (\ref{otra}) and agrees with (\ref{opsa}). A more general inequality
\be\label {ogra1} \intl_{\cgnk}  [V_k (S\cap \t)]^{m+1}\, d\t\le   \frac{b_k^{m+1}\, b_{n+km}}{b_n^{(n+km)/n} \,  b_{k+km} }\, [V_n (S)]^{1+km/n},  \ee
where $S$ is a convex body and $m \in \{1,\ldots, n\}$ is due to Schneider \cite{Sc85}.
Note that (\ref{ogra}) and (\ref{ogra1}) assume the set $S$ to be bounded, while (\ref{otra}) holds for arbitrary (not necessarily bounded) measurable set of finite measure.

\end{remark}

\section{Transition from $\rn$ and Affine Grassmannians to the Sphere and Compact Grassmannians}

Theorems  of the previous section can be converted into the similar statements for the Funk type transforms on the sphere and  Grassmann manifolds by making use of  the stereographic projection.  A remarkable  interrelation between diverse integral operators on $\rn$ and $S^n$  is known for many years and the corresponding transition formulas can be found in numerous publications; see, e.g.,   Mikhlin \cite[pp. 35-36]{Mi62},     Berenstein,   Casadio Tarabusi and  Kurusa \cite{BCK}, Drury \cite{Dru89a}, Rubin \cite [Section 5.2]{Ru15}, to mention a few. Below we recall the reasoning from our paper \cite{Ru04d} which has proved to be especially helpful in the general context of Grassmannians.

We consider the Euclidean space $\rn=\bbr e_1 \oplus \ldots
 \oplus\bbr e_n$ as a coordinate hyperplane in $\bbr^{n+1}=\bbr e_1 \oplus
\ldots \oplus\bbr e_{n+1}$. Given a linear subspace $V$ of $\rnn$
and a positive integer $k<\dim V$, we denote by $G_k (V)$ the
Grassmann manifold of all $k$-dimensional linear subspaces of $V$.
In particular,  $G_k(\rn)=\gnk, \; G_{k+1}(\rnn)=G_{n+1,
k+1}$. To each affine $k$-plane $\t$ in $\rn$ we associate a
$(k+1)$-dimensional linear subspace $\tau_0$ in $\rnn$ containing
 the ``lifted'' plane $\t +e_{n+1}$. This leads to a
map
\be \cgnk \ni \t=\xi +u \xrightarrow {\quad \gam_k \quad }
\tau_0 =\gam_k (\t)= \xi \oplus \bbr u_0 \in G_{n+1, k+1},\ee
where
 \be u_0=\frac{u+e_{n+1}}{|u+e_{n+1}|}=\frac{u+e_{n+1}}{\sqrt
{1+|u|^2}} \in S^n. \ee
If $\, \theta=d(\tau_0)$ is the geodesic
distance (on $S^n$) between the north pole $e_{n+1}$ and the
$k$-dimensional totally geodesic submanifold $S^n \cap \tau_0$, then
\be\label {lon} |\t|=|u|=\tan \theta.\ee
In a similar way, we define a map $\gam_j$ by
\be \cgnj \ni \z=\eta +v \xrightarrow {\quad \gam_j \quad }
\z_0 =\gam_j (\z)=\eta \oplus \bbr v_0\in G_{n+1, j+1},     \ee
 \[ v_0=\frac{v+e_{n+1}}{\sqrt
{1+|v|^2}} \in S^n,   \quad     |\z|=|v|=\tan \om,  \quad \om=d(\z_0), \]
and denote
\[ (\Lam_j f)(\z_0)=f (\gam_j^{-1}(\z_0)), \qquad f\equiv f(\z), \qquad \z\in \cgnj,\]
\[ (\Lam_k \vp)(\t_0)=\vp (\gam_k^{-1}(\t_0)),  \qquad \vp\equiv \vp(\t),  \qquad \t\in \cgnk.\]
The maps $\gam_j$ and $ \gam_k$ are one-to-one up to the corresponding subsets of measure zero.

For $0\le j<k<n$, consider the Funk type transform
  \be \label{gfunk} (F_{j+1,k+1}g)(\t_0)=\intl_{G_{j+1}(\t_0)} g (\z_0)\, d_{\t_0} \z_0, \qquad \t_0 \in G_{n+1,
  k+1}, \ee
  that integrates $g(\z_0)$ over the set of all
  $(j+1)$-dimensional linear subspaces of $\t_0$ against the canonical
  probability measure on $G_{j+1}(\t_0)$.  If $j=0$, (\ref{gfunk}) is the classical Funk transform of even functions on $S^n$ \cite{H11, Ru15}.

The following statement is a reformulation of Lemmas 3.2 and 3.4 from \cite{Ru04d}  adapted to our notation.

\begin{lemma}\label {mlio} Let  $0\le j<k<n$,
\be a\!=\!\frac{\sig_k}{\sig_j}, \quad \rho_1 (\z)\!=\!(1\!+\!|\z|^2)^{-(k +1)/2}, \quad \rho_2 (\t)\!=\!(1\!+\!|\t|^2)^{-(j
  +1)/2}.\ee
Then
\bea\label{34m}  R_{j,k}f&=&a\rho_2 \Lam_k^{-1}F_{j+1,k+1} \Lam_j \rho_1^{-1}f, \qquad  f: \cgnj  \to \bbc, \\
{}\nonumber\\
  \label{34mg}  F_{j+1,k+1}g&=&a^{-1}\Lam_k\rho_2^{-1} R_{j,k}  \rho_1\Lam_j^{-1}g, \qquad  g: G_{n+1, j+1}  \to \bbc, \eea

\be\label{34mg1}
  \intl_{\cgnk} \vp (\t)\, d\t=\frac{\sig_n}{\sig_k}\intl_{G_{n+1, k+1}} \frac{(\Lam_k \vp)(\t_0)}{(\cos \, d(\t_0))^{n+1}}\,  d_*\t_0, \ee

\be\label{34mg2}
  \intl_{G_{n+1, k+1}} g (\t_0)\, d_*\t_0=\frac{\sig_k}{\sig_n}\intl_{\cgnk} \frac{(\Lam_k^{-1} g)(\t)}{(1+|\t|^2)^{(n+1)/2}}\,  d_\t. \ee

\end{lemma}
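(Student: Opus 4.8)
The four assertions fall into two groups: the measure-transfer identities (\ref{34mg1})--(\ref{34mg2}) and the intertwining identities (\ref{34m})--(\ref{34mg}). The plan is to prove the measure transfer first and then deduce the intertwining by a fibrewise computation.

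Since (\ref{34mg1}) and (\ref{34mg2}) are mutually inverse, it suffices to compute the pushforward of the invariant probability measure under $\gam_k$, that is, to show
\[ \gam_k^*(d_*\t_0)=\frac{\sig_k}{\sig_n}\,(1+|\t|^2)^{-(n+1)/2}\,d\t \quad\text{on } \cgnk. \]
First I would parameterize a generic $(k+1)$-plane as $\t_0=\xi\oplus\bbr u_0$, where $\xi=\t_0\cap\rn\in\gnk$ and $u_0\in S^n$ is the unit vector spanning $\t_0\ominus\xi$ with positive $e_{n+1}$-component; this is precisely the data $(\xi,u)$ of $\t$ through the gnomonic map $u\mapsto u_0$, and it yields at once the relation $\cos\th=\langle u_0,e_{n+1}\rangle=(1+|u|^2)^{-1/2}$ of (\ref{lon}). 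Next, by $O(n)$-equivariance of $\gam_k$ together with transitivity of $O(n)$ on $\gnk$, the invariant measure factors in these coordinates as $d_*\t_0=c\,h(\th)\,d_*\xi\,d\om(u_0)$, where $d\om$ is the round measure on the $(n-k)$-sphere $S^n\cap(\xi^\perp\oplus\bbr e_{n+1})$ and $h$ depends only on the latitude $\th$. The gnomonic differential supplies one factor, $d\om(u_0)=(1+|u|^2)^{-(n-k+1)/2}\,du=(\cos\th)^{n-k+1}\,du$, so the whole computation reduces to the latitude density $h$.

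The crux is to identify $h(\th)=c'\,(\cos\th)^{k}$, which I would obtain from the principal-angle decomposition of the invariant measure on $G_{n+1,k+1}$ relative to the axis $\bbr e_{n+1}$ (equivalently, by computing the Jacobian of $d\gam_k$ at the single configuration $\xi=\span(e_1,\dots,e_k)$, $u=|\t|\,e_{k+1}$, and extending by $O(n)$-invariance). Combining the two factors gives $d_*\t_0=\mathrm{const}\cdot(\cos\th)^{n+1}d_*\xi\,du=\mathrm{const}\cdot(1+|\t|^2)^{-(n+1)/2}d\t$, and the constant is pinned to $\sig_k/\sig_n$ by integrating the function $1$: a beta-integral gives $\intl_{\cgnk}(1+|\t|^2)^{-(n+1)/2}d\t=\sig_n/\sig_k$, a reassuring consistency check on both the exponent and the constant.

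For the intertwining identities I would expand $(F_{j+1,k+1}g)(\t_0)=\intl_{G_{j+1}(\t_0)}g\,d_{\t_0}\z_0$ and observe that a generic $(j+1)$-subspace $\z_0\subset\t_0$ meets $\rn$ in a $j$-plane and equals $\gam_j(\z)$ for a unique affine $j$-plane $\z\subset\t$; this is the bijection between the fibre of $F_{j+1,k+1}$ and the fibre of $R_{j,k}$. Applying the transfer identity already proved, now inside $\t_0\cong\bbr^{k+1}$ (the case $n\mapsto k$, $k\mapsto j$), converts $d_{\t_0}\z_0$ into the canonical measure $d_\t\z$ on affine $j$-planes in $\t$. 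The only subtlety is that the lifted copy of $\t$ lies at $u_0$-height $h=\sqrt{1+|\t|^2}$ inside $\t_0$, so the internal gnomonic model is rescaled by $1/h$; this produces the Pythagorean relation $\cos d_{\t_0}(\z_0)=\big((1+|\t|^2)/(1+|\z|^2)\big)^{1/2}$, coming from $|\z|^2=|\t|^2+\dist_\t(u,\z)^2$, together with the scaling factor $(1+|\t|^2)^{-(k-j)/2}$ relating the model measure to $d_\t\z$. Collecting powers yields the weight $(1+|\t|^2)^{(j+1)/2}(1+|\z|^2)^{-(k+1)/2}=\rho_2^{-1}(\t)\,\rho_1(\z)$ and the constant $\sig_j/\sig_k=a^{-1}$, which is exactly (\ref{34mg}); formula (\ref{34m}) then follows by setting $g=\Lam_j\rho_1^{-1}f$ and solving for $R_{j,k}f$. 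The main obstacle throughout is the single factor $h(\th)\propto(\cos\th)^{k}$ in the decomposition of $d_*\t_0$; once it is in hand, the rest is trigonometry and bookkeeping of gamma and beta factors.
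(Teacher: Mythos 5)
Your proposal is correct, but note first that the paper itself contains no proof to compare against: Lemma \ref{mlio} is presented as a reformulation of Lemmas 3.2 and 3.4 of \cite{Ru04d}, so the paper's ``proof'' is a citation to that earlier work. Measured against this, your argument is a genuinely different, self-contained route, and its quantitative claims all check out. The disintegration $d_*\t_0=c\,h(\th)\,d_*\xi\,d\om(u_0)$ is legitimate, since $O(n)$ is transitive on $\gnk$ and the stabilizer $O(k)\times O(n-k)$ of $\xi$ acts on the fibre sphere with latitude spheres as orbits; the gnomonic Jacobian $(\cos\th)^{n-k+1}$ is right; and your crux claim $h(\th)\propto(\cos\th)^k$ is indeed what the single-point Jacobian gives. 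Concretely, at $\xi=\bbr e_1\oplus\dots\oplus\bbr e_k$, $u_0=\sin\th\,e_{k+1}+\cos\th\,e_{n+1}$, the fibre directions $\delta u_0$ map isometrically into $T_{\t_0}G_{n+1,k+1}$, while a base variation $e_i\mapsto e_\a$ maps isometrically for $k+2\le\a\le n$ and is contracted by the factor $\cos\th$ for $\a=k+1$, because the component of $e_{k+1}$ orthogonal to $\t_0$ is $\cos\th\,w$ with $w=\cos\th\,e_{k+1}-\sin\th\,e_{n+1}$; this yields exactly $\cos^k\th$, hence $(\cos\th)^{n+1}$ in total, and your beta integral $\intl_{\cgnk}(1+|\t|^2)^{-(n+1)/2}\,d\t=\sig_n/\sig_k$ then fixes the constant. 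The fibrewise bootstrap for (\ref{34m})--(\ref{34mg}) is also sound: with the internal plane $\z'\subset\xi$ related to $\z\subset\t$ by $\z=h\z'+u$, $h=\sqrt{1+|\t|^2}$, one has $1+|\z|^2=h^2(1+|\z'|^2)$ and $d_\t\z=h^{k-j}\,d\z'$, and collecting powers gives precisely the weight $\rho_2^{-1}(\t)\rho_1(\z)$ and the constant $a^{-1}=\sig_j/\sig_k$. What your route buys is transparency---each power of $\cos\th$ is attributed to a specific geometric effect (gnomonic distortion in the fibre versus tilting of the base directions), and the intertwining identities cost nothing extra because they reuse the transfer identity internally in $\t_0$ with $n\mapsto k$, $k\mapsto j$. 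The only thing separating your text from a complete proof is that the crux computation is described rather than executed; since the invariance argument and the normalization check alone do \emph{not} determine the exponent in $h(\th)$ (they only constrain the constant once the exponent is known), a final write-up must include the Jacobian calculation sketched above.
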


\subsection{Unweighted Estimates}  Lemma \ref{mlio} implies the following statement.

\begin{corollary}\label {mlio2} Let

\be\label {lonm} p=\frac{n+1}{k+1}, \quad q=\frac{n+1}{j+1}, \quad b=\left (\frac{\sig_j}{\sig_n}\right )^{1/p}, \quad c=\left (\frac{\sig_k}{\sig_n}\right )^{1/q}.\ee
Then

\be\label {lonm1}
||\Lam_j \rho_1^{-1}f ||_{L^p(G_{n+1, j+1})}=b\, ||f||_{L^p(\cgnj)}, \ee

\be\label {lonm2}
||\Lam_k\rho_2^{-1} R_{j,k}f ||_{L^q(G_{n+1, k+1})}=c \,||R_{j,k}f||_{L^q(\cgnk)}. \ee

\end{corollary}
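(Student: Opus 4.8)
The plan is to reduce both identities to a single application of the integration formula (\ref{34mg2}), raised to the appropriate power, and then to exploit the fact that the weights $\rho_1$ and $\rho_2$ are calibrated precisely so as to cancel the factor $(1+|\cdot|^2)^{-(n+1)/2}$ occurring in that formula.

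For (\ref{lonm1}), first I would set $g=\Lam_j\rho_1^{-1}f$, so that $\Lam_j^{-1}g=\rho_1^{-1}f$. Since $\Lam_j^{-1}$ is precomposition with $\gam_j$ (a bijection up to null sets), absolute values and powers pass through it, giving $(\Lam_j^{-1}|g|^p)(\z)=\rho_1(\z)^{-p}|f(\z)|^p$. Applying (\ref{34mg2}) with $k$ and $g$ replaced by $j$ and $|g|^p$ then turns $||g||_{L^p(G_{n+1,j+1})}^p$ into $(\sig_j/\sig_n)\intl_{\cgnj}\rho_1(\z)^{-p}(1+|\z|^2)^{-(n+1)/2}|f(\z)|^p\,d\z$. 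The decisive observation is that $p=(n+1)/(k+1)$ forces $(k+1)p/2=(n+1)/2$, whence $\rho_1(\z)^{-p}=(1+|\z|^2)^{(n+1)/2}$ cancels the weight exactly, leaving $(\sig_j/\sig_n)||f||_{L^p(\cgnj)}^p$. Extracting $p$-th roots produces the constant $b=(\sig_j/\sig_n)^{1/p}$.

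The identity (\ref{lonm2}) follows along identical lines with the parameters interchanged. Writing $\vp=R_{j,k}f$ and $h=\Lam_k\rho_2^{-1}\vp$, I would use $(\Lam_k^{-1}|h|^q)(\t)=\rho_2(\t)^{-q}|\vp(\t)|^q$ and apply (\ref{34mg2}) as stated, with $g=|h|^q$, to reduce $||h||_{L^q(G_{n+1,k+1})}^q$ to $(\sig_k/\sig_n)\intl_{\cgnk}\rho_2(\t)^{-q}(1+|\t|^2)^{-(n+1)/2}|\vp(\t)|^q\,d\t$. Here the choice $q=(n+1)/(j+1)$ makes $(j+1)q/2=(n+1)/2$, so $\rho_2(\t)^{-q}=(1+|\t|^2)^{(n+1)/2}$ again cancels the weight, yielding $(\sig_k/\sig_n)||R_{j,k}f||_{L^q(\cgnk)}^q$ and, after $q$-th roots, the constant $c=(\sig_k/\sig_n)^{1/q}$.

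There is essentially no analytic obstacle: the entire content is the exponent bookkeeping $(k+1)p=(j+1)q=n+1$, which is exactly the definition of $p$ and $q$ in (\ref{lonm}), combined with one use of the change-of-measure formula (\ref{34mg2}). The only point deserving a line of care is the commutation $\Lam^{-1}|g|^p=|\Lam^{-1}g|^p$, which holds pointwise because $\Lam_j^{-1}$ and $\Lam_k^{-1}$ act by composition with the bijections $\gam_j$, $\gam_k$; all integrands are nonnegative measurable functions, so the identities are valid with both sides simultaneously finite or infinite, and no further justification is needed.
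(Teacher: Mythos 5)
Your proof is correct and follows essentially the same route as the paper: a single application of the change-of-measure formula (\ref{34mg2}) (with $k$ replaced by $j$ for the first identity) to the $p$-th, respectively $q$-th, power of the transplanted function, followed by the cancellation $\rho_1^{-p}=(1+|\z|^2)^{(n+1)/2}$ and $\rho_2^{-q}=(1+|\t|^2)^{(n+1)/2}$ coming from $(k+1)p=(j+1)q=n+1$. The paper's proof states the same two chains of equalities more tersely; your version merely makes the exponent bookkeeping and the commutation of powers through $\Lam_j^{-1}$, $\Lam_k^{-1}$ explicit.
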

\begin{proof} Denote by $I$ and $J$  the left-hand sides of (\ref {lonm1}) and (\ref {lonm2}), respectively.   By (\ref{34mg2}) (with $k$ replaced by $j$),
\[I^p=\frac{\sig_j}{\sig_n}\intl_{\cgnj} \frac{\Lam_j^{-1} [(\Lam_j \rho_1^{-1}f)^p ](\z)}{(1+|\z|^2)^{(n+1)/2}}\,  d\z=\frac{\sig_j}{\sig_n}\, ||f||^p_{L^p(\cgnj)}.\]
 Similarly,
 \[
J^q=\frac{\sig_k}{\sig_n}\intl_{\cgnk} \frac{\Lam_k^{-1} [(\Lam_k \rho_2^{-1} R_{j,k}f)^q](\t)}{(1+|\t|^2)^{(n+1)/2}}\,  d\t=\frac{\sig_k}{\sig_n}\, ||R_{j,k}f||^q_{L^q(\cgnk)}.\]
 \end{proof}

\begin{theorem}\label {lonm3p0} Let  $0\le j< k<n$,
\[ p=\frac{n+1}{k+1}, \qquad q=\frac{n+1}{j+1}, \qquad \Om_{j,k} (n) =\left ( \sig_k^{n-j}\,\sig_j^{k-n}\, \sig_n^{j-k}\,\right )^{1/(n+1)}.\]

\noindent {\rm (i)} If $ F_{j+1,k+1}$ is bounded from $L^p(G_{n+1, j+1})$ to $L^q(G_{n+1, k+1})$, then $R_{j,k}$ is bounded from $L^p(\cgnj)$ to $L^q(\cgnk)$ and
\be\label {lonm3}
\Om_{j,k} (n) \le ||R_{j,k}||\le \Om_{j,k} (n) || F_{j+1,k+1} ||.\ee

\noindent {\rm (ii)}  Conversely, if $R_{j,k}$ is bounded from $L^p(\cgnj)$ to $L^q(\cgnk)$, then    $ F_{j+1,k+1}$ is bounded from $L^p(G_{n+1, j+1})$ to $L^q(G_{n+1, k+1})$ and
\be\label {lonm4}
1 \le  || F_{j+1,k+1} || \le \Om^{-1}_{j,k} (n)\, ||R_{j,k}||.\ee
\end{theorem}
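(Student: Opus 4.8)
The plan is to read both parts of the theorem directly off the intertwining relations (\ref{34m}), (\ref{34mg}) of Lemma \ref{mlio} combined with the norm identities (\ref{lonm1}), (\ref{lonm2}) of Corollary \ref{mlio2}; the whole argument is bookkeeping of the three constants $a=\sig_k/\sig_j$, $b=(\sig_j/\sig_n)^{1/p}$, $c=(\sig_k/\sig_n)^{1/q}$. The first thing I would record, once and for all, is the identity $ab/c=\Om_{j,k}(n)$. Using $1/p=(k+1)/(n+1)$ and $1/q=(j+1)/(n+1)$ and collecting powers of $\sig_k$, $\sig_j$, $\sig_n$ gives exponents $(n-j)/(n+1)$, $(k-n)/(n+1)$, $(j-k)/(n+1)$, respectively, which is precisely $\Om_{j,k}(n)=(\sig_k^{n-j}\,\sig_j^{k-n}\,\sig_n^{j-k})^{1/(n+1)}$; equivalently $c/(ab)=\Om_{j,k}^{-1}(n)$.

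For part (i) I would start from (\ref{34m}) written as $\rho_2^{-1}R_{j,k}f=a\,\Lam_k^{-1}F_{j+1,k+1}\Lam_j\rho_1^{-1}f$, apply $\Lam_k$ to get $\Lam_k\rho_2^{-1}R_{j,k}f=a\,F_{j+1,k+1}g$ with $g:=\Lam_j\rho_1^{-1}f$, and then take $L^q(G_{n+1,k+1})$ norms. By (\ref{lonm2}) the left-hand side equals $c\,||R_{j,k}f||_{L^q(\cgnk)}$, while (\ref{lonm1}) gives $||g||_{L^p(G_{n+1,j+1})}=b\,||f||_{L^p(\cgnj)}$. Hence $||R_{j,k}f||_{L^q(\cgnk)}=(a/c)\,||F_{j+1,k+1}g||_{L^q}$, and boundedness of $F_{j+1,k+1}$ yields $||R_{j,k}f||\le (ab/c)\,||F_{j+1,k+1}||\,||f||=\Om_{j,k}(n)\,||F_{j+1,k+1}||\,||f||$, i.e. the upper estimate in (\ref{lonm3}). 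The matching lower bound $\Om_{j,k}(n)\le ||R_{j,k}||$ is already supplied by the Lemma preceding the theorem, that is by (\ref{dryhh}), so part (i) closes.

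Part (ii) is symmetric. Starting from (\ref{34mg}) I set $f:=\rho_1\Lam_j^{-1}g$, so $\Lam_j\rho_1^{-1}f=g$ and (\ref{lonm1}) gives $||f||_{L^p(\cgnj)}=b^{-1}||g||_{L^p}$. Since $F_{j+1,k+1}g=a^{-1}\Lam_k\rho_2^{-1}R_{j,k}f$, identity (\ref{lonm2}) yields $||F_{j+1,k+1}g||_{L^q}=(c/a)\,||R_{j,k}f||_{L^q(\cgnk)}$, and boundedness of $R_{j,k}$ then gives $||F_{j+1,k+1}||\le (c/(ab))\,||R_{j,k}||=\Om_{j,k}^{-1}(n)\,||R_{j,k}||$, the upper estimate in (\ref{lonm4}). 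For the lower bound $||F_{j+1,k+1}||\ge 1$ I would test on the constant function equal to $1$: since $F_{j+1,k+1}$ integrates against a probability measure one has $F_{j+1,k+1}1=1$, and both Grassmannians carry probability measures, so $||1||_{L^p}=||1||_{L^q}=1$ and therefore $||F_{j+1,k+1}||\ge 1$.

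Because the maps $\gam_j,\gam_k$ are bijective off null sets, the correspondences $f\leftrightarrow g$ used in the two parts exhaust the respective domains, so no generality is lost. I do not anticipate a genuine obstacle: all the analytic content sits in Lemma \ref{mlio} and Corollary \ref{mlio2}, and the only places demanding care are the exponent arithmetic behind $ab/c=\Om_{j,k}(n)$ and the normalization of the invariant measures, so that the constant function has unit norm and $F_{j+1,k+1}1=1$. The one modeling point I would double-check is that the $L^p$ spaces on $G_{n+1,j+1}$ and $G_{n+1,k+1}$ appearing in the statement are taken with respect to the same probability measures $d_*$ used in (\ref{34mg1})--(\ref{34mg2}); granting this, the argument is complete.
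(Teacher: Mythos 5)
Your proposal is correct and follows essentially the same route as the paper's proof: both parts are read off the intertwining relations (\ref{34m})--(\ref{34mg}) together with the norm identities (\ref{lonm1})--(\ref{lonm2}), with the lower bound in (i) supplied by the lemma (i.e.\ (\ref{dryhh})) and the lower bound in (ii) by testing $F_{j+1,k+1}$ on the constant function $\chi$. Your explicit verification that $ab/c=\Om_{j,k}(n)$ is a detail the paper leaves implicit, but it is the same argument.
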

\begin{proof} {\rm (i)}  Using successively (\ref{lonm2}), (\ref{34m}) and  (\ref{lonm1}), we obtain
\bea
||R_{j,k} f||_q  &=&\frac{1}{c}\, ||\Lam_k \rho_2^{-1}R_{j,k}f||_q = \frac{a}{c}\,  ||F_{j+1,k+1}\Lam_j \rho_1^{-1} f||_q \nonumber\\
&\le& \frac{a}{c}\, ||F_{j+1,k+1}|| \, ||\Lam_j \rho_1^{-1} f||_p = \frac{ab}{c}\, ||F_{j+1,k+1}|| \, || f||_p. \nonumber\eea
Hence, $||R_{j,k}||\le (ab/c )\, ||F_{j+1,k+1}||=\Om_{j,k} (n) || F_{j+1,k+1}||$. The left inequality in (\ref {lonm3}) mimics (\ref{dryhh}).

 {\rm (ii)} The successive use of (\ref{34mg}), (\ref{lonm2}) and (\ref{lonm1}) yields
\bea
||F_{j+1,k+1}g||_q &=&\frac{1}{a}\, ||\Lam_k \rho_2^{-1}R_{j,k} \rho_1\Lam_j^{-1} g||_q =\frac{c}{a}\,||R_{j,k} \rho_1 \Lam_j^{-1} g||_q\nonumber\\
&\le& \frac{c}{a}\,||R_{j,k} ||\, ||\rho_1 \Lam_j^{-1} g||_p =\frac{c}{ab}\,||R_{j,k} ||\, ||\Lam_j \rho_1^{-1}\rho_1 \Lam_j^{-1} g||_p\nonumber\\
&=&\frac{c}{ab}\,||R_{j,k} ||\, || g||_p=\Om^{-1}_{j,k} (n)\,||R_{j,k} ||\,|| g||_p.\nonumber\eea
Hence $||F_{j+1,k+1}||\le \Om^{-1}_{j,k} (n)\,||R_{j,k} ||$. The left inequality in (\ref {lonm4}) follows from the obvious equality $||F_{j+1,k+1}\chi ||_q =||\chi ||_p =1$.
 \end{proof}

The inequalities (\ref {lonm3}) and (\ref {lonm4}) imply the following statement.
\begin{corollary}\label{corcon} Suppose that  Conjecture \ref{conj} is true. Then, for all $0\le j< k<n$,
\be\label {lonm5}
||R_{j,k}||=\Om_{j,k} (n) || F_{j+1,k+1} ||\ee
in the  corresponding $L^p$-$L^q$ setting, as in Theorem \ref{lonm3p0}. In particular, if $|| F_{j+1,k+1} ||=1$, then $||R_{j,k}||=\Om_{j,k} (n)$.
\end{corollary}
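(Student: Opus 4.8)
The plan is to obtain the equality (\ref{lonm5}) by squeezing together the two inequality chains already furnished by Theorem \ref{lonm3p0}. The only substantive input I need from Conjecture \ref{conj} is that it certifies $R_{j,k}$ to be a bounded operator from $L^p(\cgnj)$ to $L^q(\cgnk)$ in the stated endpoint range of exponents; beyond that, the argument is purely a matter of combining the matched constants in (\ref{lonm3}) and (\ref{lonm4}).

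First I would invoke Theorem \ref{lonm3p0}(ii): since $R_{j,k}$ is bounded, the Funk type transform $F_{j+1,k+1}$ is bounded from $L^p(G_{n+1, j+1})$ to $L^q(G_{n+1, k+1})$, and the right-hand estimate in (\ref{lonm4}) gives $||F_{j+1,k+1}|| \le \Om^{-1}_{j,k}(n)\, ||R_{j,k}||$, which I would rearrange into $\Om_{j,k}(n)\, ||F_{j+1,k+1}|| \le ||R_{j,k}||$. Next, now that $F_{j+1,k+1}$ has been shown to be bounded, the hypothesis of Theorem \ref{lonm3p0}(i) is satisfied, so the right-hand estimate in (\ref{lonm3}) supplies the reverse inequality $||R_{j,k}|| \le \Om_{j,k}(n)\, ||F_{j+1,k+1}||$. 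The two opposing bounds force equality, which is exactly (\ref{lonm5}); the final ``in particular'' assertion then drops out by setting $||F_{j+1,k+1}|| = 1$.

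The one point that must be handled with care is the logical bootstrapping, since Theorem \ref{lonm3p0} is phrased as two separate conditional implications. I must first secure the boundedness of $R_{j,k}$ through the conjecture in order to activate part (ii), and only then does the boundedness of $F_{j+1,k+1}$ become available, which in turn activates part (i). Once both parts are in force, no computation remains: the constant $\Om_{j,k}(n)$ in (\ref{lonm3}) and its reciprocal $\Om^{-1}_{j,k}(n)$ in (\ref{lonm4}) are built precisely to cancel, so the two estimates are exact mutual inverses and the squeeze is immediate. I therefore do not anticipate any genuine obstacle; the entire content of the corollary is already encoded in the matched constants of Theorem \ref{lonm3p0}, and the hypothesis that Conjecture \ref{conj} holds serves only to guarantee that the relevant operator norms are finite and that both conditional statements may be applied simultaneously.
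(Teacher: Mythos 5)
Your proof is correct and is essentially the paper's own argument: the corollary is presented there as an immediate consequence of the matched inequalities (\ref{lonm3}) and (\ref{lonm4}), which is exactly the squeeze you perform after using Conjecture \ref{conj} to secure boundedness of $R_{j,k}$, then part (ii) to get boundedness of $F_{j+1,k+1}$ together with $\Om_{j,k}(n)\,\|F_{j+1,k+1}\| \le \|R_{j,k}\|$, and finally part (i) for the reverse bound. Your observation that only the boundedness assertion of the conjecture (not the sharp constant or extremizers) is actually used is accurate and consistent with the paper's reasoning.
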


In view of forthcoming applications, for the sake of convenience we
replace $n$ by $n-1$, $k$ by $k-1$, and $j$ by $j-1$. Then for all $0\le j<k<n$, Corollary \ref{corcon} implies

\be\label {alonm6}
\Big ( \intl_{G_{n, k}} |(F_{j,k} \vp)(\t_0)|^{n/j}\, d_*\t_0 \Big )^{j/n} \le \Big ( \intl_{G_{n, j}} |\vp (\z_0)|^{n/k}\, d_*\z_0 \Big )^{k/n}.\ee
If Conjecture \ref{conj} is true, this inequality is sharp.
A similar inequality without sharp constant was outlined by Drury   \cite[formula (10)]{Dru89}, \cite[Theorem 1]{Dru89a}.

\begin{remark} \label {xalonm6} The case $j=0$ deserves particular mentioning. By Theorem \ref{OSBLCDF}, $||R_{k}||=\Om_{k} (n)$. Hence  (\ref{lonm5}) yields
\[
 || F_{1,k} ||_{L^p(G_{n, 1})\to L^q(G_{n, k})}=1, \quad  p=n/k, \quad q=n, \quad 0< k<n.\]
  Identifying  functions on $G_{n, 1}$ with  even functions on $\sn$ and noting that
\be\label{funk} (F_{1,k} \vp)(\t_0) \equiv (F_k\vp)(\t_0)=\intl_{\sn \cap \t_0} \vp (\th)\, d_{\t_0}\th\ee
  is  the  Funk type transform that integrates $\vp$ over $(k-1)$-dimensional geodesics $S^{n-1} \cap \t_0$ with respect to the corresponding probability measure  \cite[p. 133]{H11},  we  obtain a sharp inequality
\be\label {lonm6}
\Big ( \intl_{G_{n, k}} |(F_k \vp)(\t_0)|^n\, d_*\t_0 \Big )^{1/n} \le \Big ( \intl_{\sn} |\vp (\th)|^{n/k}\, d_*\th \Big )^{k/n}.\ee
Because odd functions are annihilated by $F_k$, the assumption of evenness of $\vp$  can be dropped. Indeed, setting
$\vp=\vp_+ + \vp_- $, where $\vp_{\pm} (\th)=[\vp (\th) \pm \vp (-\th)]/2$, we have
\bea
&&\Big ( \intl_{G_{n, k}} |(F_k \vp)(\t_0)|^n\, d_*\t_0 \Big )^{1/n}= \Big ( \intl_{G_{n, k}} |(F_k \vp_+)(\t_0)|^n\, d_*\t_0 \Big )^{1/n}\nonumber\\
&&\le \Big ( \intl_{\sn} |\vp_+ (\th)|^{n/k}\, d_*\th \Big )^{k/n}=\frac{1}{2} \Big ( \intl_{\sn} |[\vp (\th) + \vp (-\th)]|^{n/k}\, d_*\th \Big )^{k/n}\nonumber\\
&&\le \frac{1}{2} \, \Big [\Big ( \intl_{\sn} |\vp (\th)|^{n/k}\, d_*\th \Big )^{k/n}  +   \Big ( \intl_{\sn} | \vp (-\th)|^{n/k}\, d_*\th \Big )^{k/n}\Big ]\nonumber\\
&&=\Big ( \intl_{\sn} |\vp (\th)|^{n/k}\, d_*\th \Big )^{k/n}.\nonumber\eea

The inequality (\ref{lonm6}) provides additional information to the corresponding results of Christ \cite[Theorem 2.1 (B)]{Chr84} and Drury \cite[Theorem 1]{Dru89}, where the  sharp constant is not specified. An alternative derivation of (\ref{lonm6}) can be found in Drouot \cite[Theorem 2]{Dro16}.
\end{remark}

\subsection{Weighted Estimates}
We combine Lemma \ref{mlio} with Theorem \ref{kk5cdj}  and replace $n+1$ by $n$, $k+1$ by $k$, and $j+1$ by $j$, respectively. This gives the following statement.

\begin{theorem}\label {mn1} Let $0\le j< k<n$, $\z_0 \in G_{n,j}$, $\t_0 \in G_{n,k}$,
\[\a (\t_0)= (\sin d(\t_0))^{\nu p}\, (\cos \, d(\t_0))^{(j-\nu) p-n}, \] \[\b(\z_0)=  (\sin d(\z_0))^{\mu p} \,(\cos \, d(\z_0))^{(k-\mu) p-n}.\]
Suppose that \[1\le p \le \infty,  \qquad \nu=\mu-(k-j)/p', \qquad \mu>k-n/p-j/p'.\]
 Then
\be\label{0kiy} \Big (\intl_{G_{n,k}}\,|(F_{j,k} \vp)(\t_0)|^p\, \a (\t_0)\, d_*\t_0 \Big )^{1/p}\le c\, \Big (\intl_{G_{n,j}} |\vp(\z_0)|^p\,\b(\z_0)\, d_*\z_0 \Big )^{1/p},\ee
where $c$ is a sharp constant having the form
\be\label{0kiyy} c=  \left (\frac{ \Gam(k/2)}{\Gam(j/2)}\right )^{1/p'}\, \left (\frac{ \Gam((n-j)/2)}{\Gam((n-k)/2)}\right )^{1/p}\, \frac{\Gam ((\mu +n/p -k+j/p')/2)}{\Gam ((\mu +n/p-j/p)/2)}.\ee
If $p=1$, (\ref{0kiy}) becomes an explicit equality.
\end{theorem}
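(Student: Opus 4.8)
The plan is to transplant the weighted estimate of Theorem \ref{kk5cdj} for the affine transform $R_{j,k}$ to the compact Funk transform $F_{j+1,k+1}$ by means of the intertwining relations of Lemma \ref{mlio}, exactly as in the unweighted argument behind Corollary \ref{mlio2} and Theorem \ref{lonm3p0}, and then to read off the statement after the reindexing $n+1\to n$, $k+1\to k$, $j+1\to j$ prescribed just before the theorem. Concretely, given a function $\vp$ on $G_{n+1,j+1}$ (which after reindexing is the input $\vp$ on $G_{n,j}$), I would put $f=\rho_1\Lam_j^{-1}\vp$, a function on $\cgnj$; then (\ref{34m}) gives $F_{j+1,k+1}\vp=a^{-1}\Lam_k\rho_2^{-1}R_{j,k}f$ with $a=\sig_k/\sig_j$. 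The two weighted integrals in (\ref{0kiy}) will then be identified, up to explicit constants, with the two weighted $L^p$ norms occurring in $\|R_{j,k}f\|_{L^p_\nu(\cgnk)}\le\om_{j,k,p,\mu}(n)\,\|f\|_{L^p_\mu(\cgnj)}$, so that (\ref{0kiy}) is merely Theorem \ref{kk5cdj} rewritten.

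The substance of the proof is the conversion of measures and weights. Using (\ref{34mg1}) together with (\ref{lon}), i.e. $|\t|=\tan d(\t_0)$ and $(1+|\t|^2)^{-1/2}=\cos d(\t_0)$, the left-hand side becomes
\be\label{pf-lhs}
\|R_{j,k}f\|_{L^p_\nu(\cgnk)}^p=\frac{\sig_n}{\sig_k}\,a^p\intl_{G_{n+1,k+1}}|(F_{j+1,k+1}\vp)(\t_0)|^p\,(\sin d(\t_0))^{\nu p}(\cos d(\t_0))^{(j+1-\nu)p-(n+1)}\,d_*\t_0,
\ee
since the factor $\rho_2(\t)^p=(\cos d(\t_0))^{(j+1)p}$ and $|\t|^{\nu p}=(\tan d(\t_0))^{\nu p}$ combine with the Jacobian $(\cos d(\t_0))^{-(n+1)}$ from (\ref{34mg1}) to give the exponent $(j+1-\nu)p-(n+1)$, which is $\a(\t_0)$ after reindexing. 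Treating the right-hand side analogously, with (\ref{34mg1}) applied with $j$ in place of $k$ and $\rho_1(\z)^p=(\cos d(\z_0))^{(k+1)p}$, yields
\be\label{pf-rhs}
\|f\|_{L^p_\mu(\cgnj)}^p=\frac{\sig_n}{\sig_j}\intl_{G_{n+1,j+1}}|\vp(\z_0)|^p\,(\sin d(\z_0))^{\mu p}(\cos d(\z_0))^{(k+1-\mu)p-(n+1)}\,d_*\z_0,
\ee
whose weight reindexes to $\b(\z_0)$.

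Combining (\ref{pf-lhs}), (\ref{pf-rhs}) with Theorem \ref{kk5cdj}, the constant multiplying the $F$-norm is $c^p=\om_{j,k,p,\mu}(n)^p\,(\sig_k/\sig_j)\,a^{-p}=\om_{j,k,p,\mu}(n)^p\,(\sig_j/\sig_k)^{p-1}$, so $c=\om_{j,k,p,\mu}(n)\,(\sig_j/\sig_k)^{1/p'}$. Inserting $\sig_k/\sig_j=\pi^{(k-j)/2}\,\Gam((j+1)/2)/\Gam((k+1)/2)$, one sees that the factor $(\sig_j/\sig_k)^{1/p'}$ cancels the power $\pi^{(k-j)/2p'}$ in $\om_{j,k,p,\mu}(n)$ from (\ref{3458j}) and replaces it by $(\Gam((k+1)/2)/\Gam((j+1)/2))^{1/p'}$; after the shift $n+1\to n$, $k+1\to k$, $j+1\to j$ the surviving Gamma factors are exactly those in (\ref{0kiyy}). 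A brief check confirms that the arguments $\mu+n/p-k+j/p'$ and $\mu+n/p-j/p$, the constraint $\mu>k-n/p-j/p'$, and the relation $\nu=\mu-(k-j)/p'$ are all invariant under this shift, because $1/p+1/p'=1$.

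Sharpness of $c$ follows at once: by (\ref{pf-lhs})--(\ref{pf-rhs}) the correspondences $\vp\mapsto f$ and $R_{j,k}f\mapsto F_{j+1,k+1}\vp$ are isometries between the relevant weighted $L^p$ spaces up to the recorded constants, so the weighted operator norm of $F_{j+1,k+1}$ equals that of $R_{j,k}$ times the constant ratio, and the latter is sharp by Theorem \ref{kk5cdj}. Finally, for $p=1$ the estimate reduces to the exact equality (\ref{eq1}), which gives the asserted equality in (\ref{0kiy}). I expect the only delicate point to be the exponent bookkeeping --- verifying that the $\cos$-powers assemble precisely into $\a$ and $\b$ and that the $\pi$- and $\sig$-factors collapse exactly to (\ref{0kiyy}) --- rather than any conceptual obstacle.
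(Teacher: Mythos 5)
Your proposal is correct and follows essentially the same route as the paper: both convert the weighted inequality of Theorem \ref{kk5cdj} into the compact setting via Lemma \ref{mlio} (using (\ref{34m}), (\ref{34mg1}), and $(\Lam_k\rho_2)(\t_0)=(\cos d(\t_0))^{j+1}$), arrive at the constant $\om_{j,k,p,\mu}(n)\,(\sig_j/\sig_k)^{1/p'}$, and finish with the reindexing $n+1\to n$, $k+1\to k$, $j+1\to j$. Your exponent bookkeeping, the cancellation of the $\pi$-factors, the sharpness argument via the norm-preserving correspondence, and the $p=1$ equality via (\ref{eq1}) all match the paper's proof.
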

\begin{proof} Let
\[ A= \intl_{\A_{n,k}} |(R_{j,k} f)(\t)|^p \, |\t|^{\nu p}\, d\t, \qquad B=\intl_{\A_{n,j}} |f(\z)|^p \, |\z|^{\mu p}\, d\z.\]
By Theorem \ref{kk5cdj},
\be\label{kiy99}
A^{1/p}\le \om_{j,k,p,\mu} (n)\, B^{1/p}.\ee
Our aim is to  covert $A$ and $B$ into the corresponding integrals over compact Grassmannians with $R_{j,k} f$ represented by the relevant  Funk type transform. We make use of
 (\ref{34mg1}) with $k$ replaced by $j$ and $\t$ by $\z$. Setting
\[ \vp(\z)=|f(\z)|^p \, |\z|^{\mu p}, \qquad \tilde h  (\z_0)\equiv h(\gam^{-1} (\z_0))=|\tilde f(\z_0)|^p \;(\tan d(\z_0))^{\mu p},\]
  we obtain
\be\label{lou} B=\frac{\sig_{n}}{\sig_{j}}\intl_{G_{n+1, j+1}} \frac{|\tilde f(\z_0)|^p \;(\tan d(\z_0))^{\mu p}}{(\cos \,d(\z_0))^{n+1}}\, d_*\z_0. \ee
To transform $A$, we observe that  by (\ref{34m}),
\be\label{lou1}
R_{j,k} f= \frac{\sig_k}{\sig_j} \, \rho_2 \Lam_k^{-1}F_{j+1,k+1} \Lam_j \rho_1^{-1}f. \ee
 Hence, by (\ref{34m}) and (\ref{34mg1}),
 \bea
A&=&\left (\frac{\sig_k}{\sig_j}\right )^p \,\intl_{\A_{n,k}} |\rho_2 (\t)\, (\Lam_k^{-1}F_{j+1,k+1} \Lam_j \rho_1^{-1}f)(\t)|^p\, |\t|^{\nu p}\, d\t\nonumber\\
&=&\left (\frac{\sig_k}{\sig_j}\right )^p \,\frac{\sig_{n}}{\sig_{k}}\intl_{G_{n+1, k+1}} |(F_{j+1,k+1}  \Lam_j \rho_1^{-1}f)(\t_0)|^p\nonumber\\
 &\times&  \frac{(\tan d(\t_0))^{\nu p}\, (\cos \,d(\t_0))^{(j+1)p}}{(\cos \,d(\t_0))^{n+1}}\, d_*\t_0\nonumber\eea
because
\[(\Lam_k \rho_2)(\t_0)= (1+(\tan d(\t_0))^2 )^{-(j+1)/2}=(\cos \,d(\t_0))^{j+1}.\]
Now we change the notation by setting $\vp (\z_0)= (\Lam_j \rho_1^{-1}f)(\z_0)$. Then obvious simplification allows us to write (\ref{kiy99}) in the form
\bea\label{kiy7} &&\Big (\intl_{G_{n+1,k+1}} (\sin d(\t_0))^{\nu p}\, (\cos \, d(\t_0))^{(j+1-\nu) p-n-1} \,|(F_{j,k} \vp)(\t_0)|^p\, d_*\t_0 \Big )^{1/p}\nonumber\\
&&\le c \, \Big (\intl_{G_{n+1,j+1}} (\sin d(\z_0))^{\mu p} \,(\cos \, d(\z_0))^{(k+1-\mu) p-n-1}\, |\vp(\z_0)|^p\, d_*\z_0 \Big )^{1/p},\nonumber\eea
 \[c=\om_{j,k,p,\mu} (n)\,\left (\sig_j /\sig_k\right )^{1/p'}.\]
 To complete the proof, it remains to replace $n+1$ by $n$, $k+1$ by $k$, and $j+1$ by $j$.    The equality sign in (\ref{0kiy}), when $p=1$,  holds by comment  {\bf 3} in Subsection \ref {lplp}.
 \end{proof}

 Let us set $\vp=F_j \psi$ in (\ref{0kiy}), where $\psi$ is a function on $S^{n-1}$. Using the equality $F_{j,k} F_j \psi=F_{k} \psi$,  we obtain the following result.

 \begin{corollary} Let $j, k, n, \a, \b$ and $c$ be the same as in Theorem \ref{mn1}. Then
 \be\label{00kiy} \Big (\intl_{G_{n,k}}\,|(F_{k} \psi)(\t_0)|^p\, \a (\t_0)\, d_*\t_0 \Big )^{1/p}\le c\, \Big (\intl_{G_{n,j}} |(F_{j}\psi)(\z_0)|^p\,\b(\z_0)\, d_*\z_0 \Big )^{1/p}\ee
 provided that the integral on the right-hand side exists in the Lebesgue sense.
\end{corollary}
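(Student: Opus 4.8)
The plan is to derive (\ref{00kiy}) from Theorem \ref{mn1} by specializing the inequality (\ref{0kiy}) to the particular test function $\vp=F_j\psi$, where $F_j=F_{1,j}$ is the Funk transform carrying (even) functions on $\sn$, equivalently on $G_{n,1}$, to functions on $G_{n,j}$, as introduced in Remark \ref{xalonm6}. Two ingredients are needed. First, $\vp=F_j\psi$ must be an admissible input in (\ref{0kiy}), i.e.\ it must belong to the weighted $L^p$ space appearing on the right-hand side there. Second, I need the composition (semigroup) identity
\be\label{compid} F_{j,k}\,F_j\,\psi=F_k\,\psi,\ee
which rewrites $F_{j,k}\vp$ on the left-hand side of (\ref{0kiy}) as $F_k\psi$. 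Granting these, the left-hand side of (\ref{0kiy}) becomes the left-hand side of (\ref{00kiy}), the right-hand side becomes exactly $c\,(\intl_{G_{n,j}}|F_j\psi|^p\,\b\,d_*\z_0)^{1/p}$, and the sharp constant $c$ from (\ref{0kiyy}) is inherited verbatim.

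The step requiring the most care is the identity (\ref{compid}). Both sides are nothing but averages of $\psi$ over the totally geodesic subsphere $\sn\cap\t_0$: the right-hand side integrates $\psi$ over the lines (one–dimensional subspaces) contained in $\t_0$ against the unique $O(k)$-invariant probability measure on $G_1(\t_0)$, while the left-hand side first averages $\psi$ over the lines contained in a $j$-plane $\z_0\subset\t_0$ and then averages the outcome over all $\z_0\in G_j(\t_0)$, again against invariant probability measures. By the transitivity of the orthogonal group acting on $\t_0$ together with the uniqueness of the invariant probability measure, the composite ``random point on a random great subsphere'' distribution is itself an $O(k)$-invariant probability measure on $\sn\cap\t_0$, hence coincides with the single uniform average on the right; Fubini's theorem legitimizes interchanging the two integrations, and the normalization to probability measures guarantees that no spurious constant is produced. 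In the double-fibration language this is just the associativity of the nested integrations over flags (line) $\subset\z_0\subset\t_0$, consistent with the transition formulas of Lemma \ref{mlio}.

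Finally I would reconcile the hypotheses. The corollary is asserted only when the right-hand side of (\ref{00kiy}) exists in the Lebesgue sense; this is precisely the requirement that $F_j\psi$ lie in the weighted $L^p$ space on the right of (\ref{0kiy}), so $\vp=F_j\psi$ is a legitimate choice in Theorem \ref{mn1} and (\ref{0kiy}) applies without further qualification. Once (\ref{compid}) is established, essentially no computation remains: the inequality (\ref{00kiy}) is simply (\ref{0kiy}) read off for $\vp=F_j\psi$, with the identical weights $\a,\b$ and the identical sharp constant $c$. Thus the only genuinely nontrivial point is the clean verification of the composition formula (\ref{compid}), together with the bookkeeping confirming that the invariant probability measures compose without an extra factor.
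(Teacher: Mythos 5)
Your proposal is correct and takes essentially the same route as the paper: the paper obtains (\ref{00kiy}) precisely by setting $\vp=F_j\psi$ in (\ref{0kiy}) and invoking the composition identity $F_{j,k}F_j\psi=F_k\psi$. Your extra justification of that identity (uniqueness of the invariant probability measure plus Fubini) and of the admissibility of $F_j\psi$ is sound detail that the paper leaves implicit.
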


For further purposes, we  formulate a particular case of Theorem \ref{mn1} corresponding to $j=1$, when a function $\vp$  on $G_{n, 1}$ can be identified with an even function on $\sn$ and the evenness restriction can be dropped, as we did in (\ref{lonm6}).

\begin{theorem}\label {mn21} Let $0< k<n$,  $\t_0 \in G_{n,k}$, $\theta=(\theta_1, \ldots, \theta_n)\in  S^{n-1}$. We set
\[\a_1 (\t_0)= (\sin d(\t_0))^{\nu p}\, (\cos \, d(\t_0))^{(1-\nu) p-n}, \]
\[\b_1(\theta)=  (1-\theta_n^2)^{\mu p/2} \,|\theta_n|^{(k-\mu) p-n},\]
and suppose that \[1\le p \le \infty,  \qquad \nu=\mu-(k-1)/p', \qquad \mu>k-n/p-1/p'.\]
 Then for every measurable function $\vp$ on $\sn$,
\be\label{kiy} \Big (\intl_{G_{n,k}}\,|(F_{k} \vp)(\t_0)|^p\, \a_1 (\t_0)\, d_*\t_0 \Big )^{1/p}\le c_1\, \Big (\intl_{S^{n-1}} |\vp(\theta)|^p\,\b_1 (\theta)\, d_*\theta \Big )^{1/p},\ee
where $c_1$ is a sharp constant having the form
\be\label{kiyy} c_1=  \left (\frac{ \Gam(k/2)}{\Gam(1/2)}\right )^{1/p'}\, \left (\frac{ \Gam((n-1)/2)}{\Gam((n-k)/2)}\right )^{1/p}\, \frac{\Gam ((\mu +n/p -k+1/p')/2)}{\Gam ((\mu +n/p-1/p)/2)}.\ee
If $p=1$, (\ref{kiy}) becomes an explicit equality. Specifically,
\be\label{pkiyva} \intl_{G_{n,k}}\, (F_{k} \vp)(\t_0)\, \tilde \a_1 (\t_0)\, d_*\t_0 = \tilde c_1\,\intl_{S^{n-1}} \vp(\theta)\,\tilde \b_1(\theta)\, d_*\theta,\ee
where
\[ \tilde \a_1 (\t_0)=(\sin d(\t_0))^{\mu}\, (\cos \, d(\t_0))^{1-\mu-n}, \qquad
\tilde \b_1(\theta)=  (1-\theta_n^2)^{\mu/2} \,|\theta_n|^{k-\mu-n},\]
\[\tilde c_1=  \frac{ \Gam((n-1)/2)}{\Gam((n-k)/2)}\, \frac{\Gam ((\mu +n -k)/2)}{\Gam ((\mu +n-1)/2)}, \qquad \mu >k-n.\]
\end{theorem}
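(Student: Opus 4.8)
The plan is to read off Theorem \ref{mn21} from Theorem \ref{mn1} by specializing to $j=1$ and then to remove the evenness constraint by the same symmetrization used in deriving (\ref{lonm6}).

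First I would put $j=1$ in Theorem \ref{mn1}. The hypotheses $\nu=\mu-(k-1)/p'$ and $\mu>k-n/p-1/p'$ are exactly those stated, the target weight $\a(\t_0)$ becomes $\a_1(\t_0)$ verbatim, and the sharp constant $c$ in (\ref{0kiyy}) turns into $c_1$ in (\ref{kiyy}) upon replacing $\Gam(j/2)$ by $\Gam(1/2)$. It then remains to recast the right-hand integral over $G_{n,1}$ as an integral over $\sn$. Identifying a line $\z_0=\bbr\th$ through the origin with the antipodal pair $\{\th,-\th\}=\sn\cap\z_0$, a function $\vp$ on $G_{n,1}$ corresponds to an even function on $\sn$, and $F_{1,k}\vp=F_k\vp$ as in (\ref{funk}).

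Next I would carry out the geometric translation of the source weight. For $\z_0=\bbr\th$ the totally geodesic submanifold $\sn\cap\z_0$ is $0$-dimensional, and the geodesic distance from the pole $e_n$ to it is $d(\z_0)=\arccos|\th_n|$, where $\th_n=\lng\th,e_n\rng$; hence $\cos d(\z_0)=|\th_n|$ and $\sin d(\z_0)=(1-\th_n^2)^{1/2}$. Substituting these into $\b(\z_0)=(\sin d(\z_0))^{\mu p}(\cos d(\z_0))^{(k-\mu)p-n}$ from Theorem \ref{mn1} produces exactly $\b_1(\th)$. Since $G_{n,1}$ carries the push-forward of the normalized measure $d_*\th$ under the antipodal identification, for even $\vp$ one has $\intl_{G_{n,1}}|\vp|^p\,\b\,d_*\z_0=\intl_{\sn}|\vp|^p\,\b_1\,d_*\th$, which yields (\ref{kiy}) for all even $\vp$.

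Finally I would drop the evenness exactly as in the chain following (\ref{lonm6}). Writing $\vp=\vp_++\vp_-$ with $\vp_\pm(\th)=[\vp(\th)\pm\vp(-\th)]/2$, the odd part is annihilated, so $F_k\vp=F_k\vp_+$; applying the even case to $\vp_+$ and using that $\b_1$ depends on $\th$ only through $1-\th_n^2$ and $|\th_n|$, hence is even, together with the invariance of $d_*\th$ under $\th\mapsto-\th$, the triangle inequality closes the estimate for arbitrary measurable $\vp$. For $p=1$ we have $p'=\infty$, so $\nu=\mu$, and the $p=1$ equality of Theorem \ref{mn1} specializes to (\ref{pkiyva}) with weights $\tilde\a_1,\tilde\b_1$ and constant $\tilde c_1$ read off from (\ref{kiyy}) at $p=1$. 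The only genuine obstacle is the geometric step: verifying $\cos d(\z_0)=|\th_n|$ for lines and checking that the measure identification contributes no spurious constant; everything else is substitution and the now-standard symmetrization.
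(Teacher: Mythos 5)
Your proposal is correct and follows essentially the same route as the paper, which states Theorem \ref{mn21} precisely as the $j=1$ case of Theorem \ref{mn1} with functions on $G_{n,1}$ identified with even functions on $\sn$ and evenness dropped as in the derivation of (\ref{lonm6}); you merely make explicit the details the paper leaves implicit (that $\cos d(\z_0)=|\theta_n|$ and $\sin d(\z_0)=(1-\theta_n^2)^{1/2}$ for a line $\z_0=\bbr\th$, that the double cover $\sn\to G_{n,1}$ is measure-preserving for the normalized measures, and that $\b_1$ is even). One small refinement: for the $p=1$ identity (\ref{pkiyva}) with non-even $\vp$ you should invoke linearity rather than the triangle inequality — $F_k$ annihilates $\vp_-$ on the left, while $\intl_{\sn}\vp_-\,\tilde\b_1\,d_*\theta=0$ on the right because $\tilde\b_1$ is even and $d_*\theta$ is symmetric — so both sides depend only on $\vp_+$ and the even-case equality applies.
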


Choosing $\mu, \nu, p, k$ and $n$ in Theorem \ref{mn21} in a suitable way, one can obtain  a series of new inequalities with sharp constants.

\begin{example} Let $\nu =0$, $p=n$. Then (\ref{kiy}) yields
\be\label{1kiy} \Big (\intl_{G_{n,k}}\,|(F_{k} \vp)(\t_0)|^n\, d_*\t_0 \Big )^{1/n}\le c_{0,1}\, \Big (\intl_{S^{n-1}} |\vp(\theta)|^n\,\b_0(\theta)\, d_*\theta \Big )^{1/n},\ee
where
\[\b_0(\theta)=  (1-\theta_n^2)^{(k-1)(n-1)/2} \,|\theta_n|^{k-1},\]
\be\label{kiyy} c_{0,1}=  \left (\frac{ \Gam(k/2)}{\Gam(1/2)}\right )^{1-1/n}\, \left (\frac{ \Gam((n-1)/2)}{\Gam((n-k)/2)}\right )^{1/n}\, \frac{\Gam ((1-k/n)/2)}{\Gam ((k-k/n)/2)}.\ee
\end{example}

Another  result  can be obtained if we apply  Theorem \ref{mn1} to  functions of the form $\vp= F_j\psi$ which are   Funk  transforms  over $(j-1)$-dimensional geodesics in $S^{n-1}$. Noting  that
$F_{j,k} \vp =F_{j,k} F_j\psi = F_k\psi$, we obtain the following inequality connecting Funk  transforms over geodesics of different dimensions.

\begin{theorem}\label {mn1a} Suppose that $j,k, \mu, \nu, p, \a, \b$ and $c$ have the same meaning as in Theorem \ref{mn1}. Then
\[ \Big (\intl_{G_{n,k}}\,|(F_{k} \psi)(\t_0)|^p\, \a (\t_0)\, d_*\t_0 \Big )^{1/p}\le c\, \Big (\intl_{G_{n,j}} |(F_{j} \psi)(\z_0)|^p\,\b(\z_0)\, d_*\z_0 \Big )^{1/p}.\]
\end{theorem}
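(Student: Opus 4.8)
The plan is to obtain Theorem \ref{mn1a} as an immediate specialization of Theorem \ref{mn1}, exactly as announced in the sentence preceding its statement. First I would substitute $\vp = F_j\psi$ into inequality (\ref{0kiy}), where $\psi$ is an arbitrary measurable function on $\sn$ for which the resulting right-hand side is finite. With this choice the right-hand side of (\ref{0kiy}) is already in the target form $c\,(\intl_{G_{n,j}} |(F_j\psi)(\z_0)|^p\,\b(\z_0)\,d_*\z_0)^{1/p}$, so only the left-hand side requires attention.

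The single decisive step is the composition identity $F_{j,k}F_j = F_k$ for the Funk-type transforms, already invoked in the Corollary following Theorem \ref{mn1}. Granting it, one has $F_{j,k}\vp = F_{j,k}F_j\psi = F_k\psi$, so the left-hand side of (\ref{0kiy}) becomes $(\intl_{G_{n,k}} |(F_k\psi)(\t_0)|^p\,\a(\t_0)\,d_*\t_0)^{1/p}$, which is precisely the left-hand side claimed in Theorem \ref{mn1a}; the inequality then follows verbatim, with the same sharp constant $c$ and the same weights $\a,\b$. To justify the identity I would unfold the definitions: for $\t_0\in G_{n,k}$ one has $(F_{j,k}F_j\psi)(\t_0)=\intl_{G_j(\t_0)}(\intl_{\sn\cap\z_0}\psi(\th)\,d_{\z_0}\th)\,d_{\t_0}\z_0$, and a Fubini argument over the flag $\z_0\subset\t_0$, using that every measure in sight is the canonical $O(\cdot)$-invariant probability measure, collapses the iterated average of $\psi$ into its single average over the geodesic $\sn\cap\t_0$, namely $(F_k\psi)(\t_0)$.

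The only point needing care is integrability. The substitution and the subsequent interchange of integrations are legitimate precisely when the right-hand side of the asserted inequality exists in the Lebesgue sense; under that hypothesis both Theorem \ref{mn1} and the composition identity apply directly. I do not anticipate any genuine obstacle here, since the semigroup property of spherical Funk transforms over geodesics of nested dimensions is classical, so the entire content of Theorem \ref{mn1a} reduces to feeding this single algebraic identity into Theorem \ref{mn1}.
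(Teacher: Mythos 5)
Your proposal is correct and is exactly the paper's own argument: the author derives Theorem \ref{mn1a} (and the essentially identical Corollary containing (\ref{00kiy})) by substituting $\vp=F_j\psi$ into (\ref{0kiy}) and invoking the composition identity $F_{j,k}F_j\psi=F_k\psi$, just as you do. Your additional Fubini/invariance justification of that identity and the integrability caveat are fine, and match what the paper leaves implicit.
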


\section{Cross-Sections of  Star Sets}

\subsection{Preliminaries}

 Theorems of the previous section imply a host of geometric inequalities and equalities.
 Below we give some examples. But  first we need to establish  terminology and recall some known facts.

A subset $L$ of $\rn$ is called a {\it star  set}  (with respect to the origin) if $\lam x \in L$ for every $x\in L$ and every $\lam \in [0, 1]$. A star set $L$  is uniquely determined by its radial function
\[
\rho_L (\th) = \sup \{c \ge 0: \, c \,\th \in L\}, \qquad \th \in \sn.
\]
 Everywhere in the following, we assume  the   set $L$ to be good enough so that the Lebesgue integrals on the right-hand  side of our formulas are finite. It means that $\rho_L$ belongs to the corresponding Lebesgue space on the sphere. For example, Klain \cite[Definition 2.3]{Kl} considered the so-called {\it $L^p$-stars} for which $\rho_L \in L^p(\sn)$.

For every star set $L\subset\rn$, the volume (i.e., the Lebesgue measure) of $L$ can be expressed in polar coordinates as
\be\label{kiya1}
V_n (L)=\frac{1}{n} \intl_{\sn} \rho_L^n (\th)\, d\th=b_{n} \intl_{\sn} \rho_L^n (\th)\, d_*\th. \ee
Similarly, for $0< k<n \, $ and   $\t_0 \in G_{n,k}$, the volume of the central cross-section $L\cap \t_0$ is
\be\label{kiya2}V_k (L\cap \t_0)=b_{k} \intl_{\sn \cap \t_0} \rho_L^k (\th)\, d_{\t_0}\th=b_{k} (F_{k} \rho_L^k)(\t_0),\ee
where $F_k$ is  the  Funk transform (\ref{funk}). The corresponding $p$-means
\be\label {op1ajzz}  \Big (\, \intl_{G_{n,k}} [V_k (L\cap \t_0)]^{p}\,  d_*\t_0\Big )^{1/p}\ee
were  introduced by Lutwak \cite{Lut79,  Lut88} and have proved to be useful in various geometrical considerations; see, e.g., \cite{DP},  \cite[Section 9.4 and Note 9.7 on p. 384]{Ga06}. Natural generalizations of  (\ref{kiya1}) and (\ref{kiya2}) are
{\it $m$th dual elementary mixed  volumes} (or  $m$th dual Quermassintegrals)
\be\label{dkiya1}
\tilde V_m (L)=b_{n} \intl_{\sn} \rho_L^m (\th)\, d_*\th=\frac{m}{n} \intl_L |x|^{m-n}\, dx, \ee
\be\label{dkiya2p}\tilde V_m (L\cap \t_0)=b_{k} \intl_{\sn \cap \t_0} \rho_L^m (\th)\, d_{\t_0}\th=b_{k} (F_{k} \rho_L^m)(\t_0), \quad \t_0 \in G_{n,k},\ee
which were introduced by Lutwak \cite{Lut75a, Lut} and studied by many authors; see, e.g.,
\cite[p. 158]{BuZ},  \cite[p. 409]{Ga06} and references therein. In particular, (\ref{dkiya1})-(\ref{dkiya2p}) naturally arise in the study
   of the Busemann-Petty type  comparison problems for convex bodies \cite{Had, RZ}.
The quantity (\ref{dkiya1}) can also be treated as the $m$-homogeneous rotation invariant valuation \cite {Al, Kl}.
Clearly,
\be\label{dkiya26} \tilde V_n (L) =V_n (L); \qquad \tilde V_k (L\cap \t_0)=V_k (L\cap \t_0). \ee
By the well-known property of the Funk transform \cite{Ru02b},
 \be\label{dokiya267}
 \intl_{G_{n,k}}  (F_k f)(\t_0)\, d_*\t_0=\intl_{\sn} f(\th)\, d_*\th,\ee
(\ref {dkiya1}) and (\ref{kiya2}) yield
\be\label{dkiya267}
 \tilde V_k (L) =\frac{b_{n}}{b_{k}}\intl_{G_{n,k}}  V_k (L\cap \t_0)\, d_*\t_0
 \ee
(set $f=\rho_L^k$ in (\ref{dokiya267}) and make use of (\ref{dkiya1}) and (\ref{dkiya2p})).

\subsection{Weighted Estimates}\label {Weig} We start with the  following
 \begin{remark}
All inequalities in this subsection hold with sharp constants. The sharpness is guaranteed by comment {\bf 1} in Subsection \ref{lplp} because every nonnegative function on the sphere can be regarded as a radial function of some star set. One should note that  if we impose additional restrictions on the class of star sets,  rather than  finiteness of the corresponding integrals,  the sharpness of the constants becomes unknown. For example, we do not know if our constants are sharp in the class of star bodies, when  $\rho_L$ is continuous.
\end{remark}

If $L$ is a star set, we can apply  Theorem \ref{mn21} to functionals  (\ref{kiya1})-(\ref{dkiya2p}).   For example, setting $\vp=\rho_L^m$ in (\ref{kiy}) and using (\ref{dkiya2p}), we obtain
\be\label{tmkiyv} \intl_{G_{n,k}}\, [\tilde V_m (L\cap \t_0)]^p\, \a_1 (\t_0)\, d_*\t_0 \le (c_1\,b_k)^p \,\intl_{S^{n-1}} \rho_L^{mp}(\theta)\,\b_1(\theta)\, d_*\theta,\ee
where $\a_1, \, \b_1, \, k,\, p, \,\t_0$ and $ c_1$ are the same as in (\ref{kiy}). If $p=1$, (\ref{tmkiyv}) becomes an explicit equality
\be\label{mkiyva} \intl_{G_{n,k}}\, \tilde V_m (L\cap \t_0)\, \tilde\a_1 (\t_0)\, d_*\t_0 = \tilde c_1\,b_k \intl_{S^{n-1}} \rho_L^{m}(\theta)\,\tilde\b_1(\theta)\, d_*\theta,\ee
where
$ \tilde\a_1 $, $\tilde\b_1$ and $\tilde c_1$ have the same meaning as in  (\ref{pkiyva}). For the sake of convenience, we recall
\[ \tilde \a_1 (\t_0)=(\sin d(\t_0))^{\mu}\, (\cos \, d(\t_0))^{1-\mu-n}, \qquad
\tilde \b_1(\theta)=  (1-\theta_n^2)^{\mu/2} \,|\theta_n|^{k-\mu-n},\]
\[\tilde c_1=  \frac{ \Gam((n-1)/2)}{\Gam((n-k)/2)}\, \frac{\Gam ((\mu +n -k)/2)}{\Gam ((\mu +n-1)/2)}, \qquad \mu >k-n.\]
If $\mu=0$,  (\ref{mkiyva}) becomes
\be\label{mkiyvab} \intl_{G_{n,k}}\,  \tilde V_m (L\cap \t_0)\,\frac { d_*\t_0}{(\cos \, d(\t_0))^{n-1}} = b_k\,\intl_{S^{n-1}} \rho_L^{m}(\theta)\,\frac { d_*\theta}{|\theta_n|^{n-k}}.\ee
In particular, if $m=k$, (\ref{dkiya26}) yields
\be\label{mkiyvabc} \intl_{G_{n,k}}\,   V_k (L\cap \t_0)\,\frac { d_*\t_0}{(\cos \, d(\t_0))^{n-1}} = b_k\,\intl_{S^{n-1}} \rho_L^{k}(\theta)\,\frac { d_*\theta}{|\theta_n|^{n-k}}.\ee

Further, choosing $p=n/k$ and $\mu=0$ in (\ref{tmkiyv}),  we obtain
\be\label{mkiyv3}  \intl_{G_{n,k}}\, [\tilde V_m (L\cap \t_0)]^{n/k}\, \a_2 (\t_0)\, d_*\t_0  \le \frac {(c_2\,b_k)^{n/k}}{ b_n} \,
\tilde V_{mn/k} (L),\ee
where
\[\a_2 (\t_0)=(\sin d(\t_0))^{(k-1)(k-n)/k}\, (\cos \, d(\t_0))^{1-k},\]
\[ c_2=\left (\frac{ \Gam(k/2)}{\Gam(1/2)}\right )^{1-k/n}\, \,
\left ( \frac{\Gam((n-1)/2)}{ \sig_{n-1}\,\Gam((n-k)/2)}\right )^{k/n}\, \frac{\Gam((1-k/n)/2)}{k\,\Gam((k-k/n)/2)}.\]

 If  $m=k$, (\ref{tmkiyv}) becomes
\be\label{mkiyv} \intl_{G_{n,k}}\, [V_k (L\cap \t_0)]^p\, \a_1 (\t_0)\, d_*\t_0 \le (c_1\,b_k)^p  \intl_{S^{n-1}} \rho_L^{kp}(\theta)\,\b_1(\theta)\, d_*\theta,\ee
where all parameters have the same meaning  as in (\ref{kiy}). In particular, for $p=n/k$ and $\mu=0$,
\be\label{mkkiyv3} \intl_{G_{n,k}}\, [V_k (L\cap \t_0)]^{n/k}\, \a_2 (\t_0)\, d_*\t_0\le c_2^{n/k}\, b_k^{n/k -1}\,
\vol_n  (L),\ee
where $\a_2$ and $c_2$ are the same as in (\ref{mkiyv3}).

We conclude this subsection by exhibiting a nice inequality for central sections of different dimensions. This inequality follows  from (\ref{00kiy}), (\ref{kiya2}), and (\ref{dkiya2p}),  if we set $\psi=\rho_L^j$. Specifically,
\be\label{000kiy} \intl_{G_{n,k}}\,[\tilde V_j (L\cap \t_0)]^p\, \a (\t_0)\, d_*\t_0 \le \left (\frac{c\,b_k}{b_j}\right )^p \intl_{G_{n,j}} [V_j (L\cap \z_0)]^p\,\b(\z_0)\, d_*\z_0, \ee
where all parameters have the same meaning as in Theorem \ref{mn1}. An interested reader may derive many  consequences of (\ref{000kiy}) by choosing different combinations of parameters.

\subsection{Unweighted Estimates} \label{unwe}

 Let us write  (\ref{lonm6}) for
$\vp=\rho_L^k$, where  $L$ is a measurable star set in $\rn$ of finite measure.  We obtain
\be\label{yyv31u}||F_k \rho_L^k ||_n \le ||\rho_L^k||_{n/k}\ee
or, by (\ref{kiya1}) and (\ref{kiya2}),
\be\label{mkkiyv31u} \intl_{G_{n,k}}\, [V_k (L\cap \t_0)]^{n}\,  d_*\t_0\le \frac{b_k^n}{b_n^k}\,[V_n  (L)]^k.\ee

The inequality (\ref{mkkiyv31u}) has an interesting history. If $k=n-1$ and $L$ is a convex body,  it was proved by Busemann \cite{Bu}, and is known as the {\it Busemann intersection inequality}. The case of convex bodies with any $0<k<n$  is due to Busemann and  Straus \cite{BS} and Grinberg \cite{Gr}; see also Gardner \cite [Corollary 9.4.5] {Ga06}.  The equality sign in (\ref{mkkiyv31u}) yields the celebrated Furstenberg-Tzkoni formula \cite{FT, M} for ellipsoids.  Further progress was made in Gardner's work  \cite{Ga07},   where  (\ref{mkkiyv31u}) was extended to arbitrary bounded Borel subsets of $\rn$.

Our approach to  (\ref{mkkiyv31u}), that relies on the corresponding inequality for the $k$-plane transform, shows that (\ref{mkkiyv31u}) actually holds for arbitrary (not necessarily bounded)  star set $L$ of finite measure.

For applications of (\ref{mkkiyv31u}), the reader is referred to \cite {CGL, DP, DPP, Ga06, Ga07, PP, PV, Z}. A similar inequality on the sphere and the  hyperbolic spaces was studied  by Dann,  Kim and  Yaskin \cite {DKY}.

 Following Lutwak's observation   \cite [p. 162,  (3)]{Lut93}, one can proceed in the opposite direction that may give an alternative  proof of the  Drury-Christ-Drouot  inequality (\ref{scamm}). Suppose that (\ref{mkkiyv31u}) has been  proved ``geometrically'' for star bodies $L$ with smooth boundary. Then, given a smooth nonnegative function $\vp$ on $\sn$, we can define a star set $L$ with the radial function  $\rho_L= \vp ^{1/k}$ and get $||F_k \vp ||_n \le ||\vp||_{n/k}$. The density argument extends this estimate  to all $\vp\in L^{n/k}(\sn)$, and the stereographic projection in Theorem \ref{lonm3p0} yields  (\ref{scamm}).

Further, for all $1\le j< k<n$, setting $\vp (\z_0)=(F_j \rho_L^k)(\z_0)=b_j^{-1}\tilde V_k (L\cap \z_0)$  in (\ref{alonm6}) and using the equality
\[ (F_{j,k}F_j \rho_L^k)(\t_0)= (F_k \rho_L^k)(\t_0)=b_k^{-1} V_k (L\cap \t_0),\]
 we  arrive at the following conjecture generalizing  (\ref{mkkiyv31u}).

 \begin{conjecture}\label {abji} For any  measurable star set $L$ in $\rn$ and any $1\le j< k<n$,
\be\label {ablonm6}
 \intl_{G_{n, k}} [V_k (L\cap \t_0)]^{n/j}\,  d_*\t_0  \le \left (\frac{b_k}{b_j}\right )^{n/j} \Big ( \intl_{G_{n, j}} [\tilde V_k (L\cap \z_0)]^{n/k}\, d_*\z_0 \Big )^{k/j}.\ee
More generally, choosing $\vp (\z_0)=(F_j \rho_L^m)(\z_0)=b_j^{-1}\tilde V_m (L\cap \z_0)$, we have
\be\label {ablonm6m}
 \intl_{G_{n, k}} [\tilde V_m (L\cap \t_0)]^{n/j}\,  d_*\t_0  \le \left (\frac{b_k}{b_j}\right )^{n/j} \Big ( \intl_{G_{n, j}} [\tilde V_m (L\cap \z_0)]^{n/k}\, d_*\z_0 \Big )^{k/j}.\ee
Here $m$ is an arbitrary real number for which the integral on the right-hand side  exists in the Lebesgue sense.
\end{conjecture}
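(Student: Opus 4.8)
The plan is to deduce both inequalities (\ref{ablonm6}) and (\ref{ablonm6m}) directly from the sharp Funk-type estimate (\ref{alonm6}), treating Conjecture \ref{abji} as a dictionary translation of the operator-norm statement $\|F_{j,k}\|=1$ (in the $L^{n/k}(G_{n,j})\to L^{n/j}(G_{n,k})$ setting) into the language of dual elementary mixed volumes. This is the same mechanism by which the Busemann intersection inequality (\ref{mkkiyv31u}) was obtained from the $j=0$ case, now carried out one step up. The computation rests on two facts already recorded in the text: the two instances of (\ref{dkiya2p}), namely $\tilde V_m(L\cap\z_0)=b_j\,(F_j\rho_L^m)(\z_0)$ for $\z_0\in G_{n,j}$ and $\tilde V_m(L\cap\t_0)=b_k\,(F_k\rho_L^m)(\t_0)$ for $\t_0\in G_{n,k}$, together with the semigroup relation $F_{j,k}F_j=F_k$.

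Concretely, I would set $\vp=F_j\rho_L^m$ in (\ref{alonm6}). Since $\rho_L^m\ge 0$ and $F_j$ preserves nonnegativity, $\vp$ is nonnegative and the absolute values are harmless; by (\ref{dkiya2p}) one has $\vp(\z_0)=b_j^{-1}\tilde V_m(L\cap\z_0)$. The semigroup identity then gives $F_{j,k}\vp=F_{j,k}F_j\rho_L^m=F_k\rho_L^m$, whence $(F_{j,k}\vp)(\t_0)=b_k^{-1}\tilde V_m(L\cap\t_0)$. Substituting these into (\ref{alonm6}) and pulling the scalars $b_k^{-1}$ and $b_j^{-1}$ out of the $L^{n/j}$- and $L^{n/k}$-norms turns (\ref{alonm6}) into
\[ b_k^{-1}\Big(\intl_{G_{n,k}}[\tilde V_m(L\cap\t_0)]^{n/j}\,d_*\t_0\Big)^{j/n}\le b_j^{-1}\Big(\intl_{G_{n,j}}[\tilde V_m(L\cap\z_0)]^{n/k}\,d_*\z_0\Big)^{k/n}. \]
Multiplying by $b_k$ and raising both sides to the power $n/j$ (using $(k/n)(n/j)=k/j$) yields precisely (\ref{ablonm6m}); the hypothesis that the right-hand integral exists in the Lebesgue sense is exactly the requirement $F_j\rho_L^m\in L^{n/k}(G_{n,j})$ that legitimizes the substitution. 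Specializing $m=k$ and invoking $\tilde V_k(L\cap\t_0)=V_k(L\cap\t_0)$ from (\ref{dkiya26}) on $G_{n,k}$ replaces $\tilde V_k(L\cap\t_0)$ by $V_k(L\cap\t_0)$ on the left while leaving the right-hand side untouched, giving (\ref{ablonm6}).

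The genuine obstacle is not this translation but the input (\ref{alonm6}) itself. As Corollary \ref{corcon} makes explicit, the constant $1$ in (\ref{alonm6}) is available only when the sharp norm $\|R_{j,k}\|=\Om_{j,k}(n)$ of Conjecture \ref{conj} holds, equivalently $\|F_{j,k}\|=1$. This is a theorem only for $j=0$ (Theorem \ref{OSBLCDF}), which is exactly why (\ref{mkkiyv31u}) is unconditional while (\ref{ablonm6}) and (\ref{ablonm6m}) remain conjectural for $j\ge 1$; for $j\ge 1$ only the non-sharp boundedness of $F_{j,k}$ (Drury) is known, and this would yield (\ref{ablonm6m}) with a larger, non-explicit constant in place of $(b_k/b_j)^{n/j}$. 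The honest outcome of this plan is therefore a conditional proof: Conjecture \ref{abji} follows from Conjecture \ref{conj}, and the real difficulty—pinning down the exact norm and the extremizers of the $(j,k)$-plane transform at the critical exponents when $j\ge 1$—lies entirely inside Conjecture \ref{conj}.
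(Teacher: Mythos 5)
Your derivation is exactly the paper's: substituting $\vp=F_j\rho_L^m$ into (\ref{alonm6}), using $F_{j,k}F_j=F_k$ together with (\ref{dkiya2p}) and (\ref{dkiya26}), and rearranging the constants $b_j,b_k$ and exponents is precisely how the paper arrives at (\ref{ablonm6}) and (\ref{ablonm6m}). You also correctly identify why the statement remains a conjecture rather than a theorem, namely that the input (\ref{alonm6}) with constant $1$ is available for $j\ge 1$ only modulo Conjecture \ref{conj}, so the argument yields a conditional proof -- which is exactly the paper's standpoint.
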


If  (\ref{ablonm6}) and (\ref{ablonm6m}) are true, they are sharp because $\vp \equiv 1$ yields the equality sign in   (\ref{alonm6}).

\vskip 0.3 truecm

\noindent {\it Acknowledgement.} I am thankful to Artem Zvavitch who brought my attention to the preprint \cite{DKY}.

\bibliographystyle{amsplain}

\begin{thebibliography}{10}



\bibitem {Al} S. Alesker,  Continuous rotation invariant
valuations on convex sets. {\it  Annals of Math.} {\bf 149}(1999), 977--1005.

\bibitem {BL} A. Baernstein, II and M. Loss,  Some conjectures about $L^p$ norms of $k$-plane transforms. {\it Rend.
Sem. Mat. Fis. Milano}  {\bf 67}(1997), 9--26.

\bibitem {BBG} J. Bennett, N. Bez and S. Gutiérrez,  Transversal multilinear Radon-like transforms: local and global estimates.  {\it  Rev. Mat. Iberoam.} {\bf 29}(2013),  765--788.

\bibitem  {BCK} C. A. Berenstein,  E. Casadio Tarabusi and A. Kurusa,
Radon transform on spaces of constant curvature. \textit{Proc.  Amer. Math. Soc.} {\bf 125}(1997), 455--461.

\bibitem {BGG} G. Bianchi, R. Gardner and P. Gronchi,  Symmetrization in geometry. {\it Adv. Math.} {\bf  306}(2017), 51--88.

\bibitem {BuZ} Yu. D. Burago and V. A. Zalgaller, {\it Geometric Inequalities}, Springer (2014).

\bibitem {Bu} H. Busemann,  Volume in terms of concurrent cross-sections. {\it Pacific J. Math.} {\bf 3}(1953), 1--12.

\bibitem {BS} H. Busemann and E. G. Straus, Area and normality. {\it Pacific J. Math.} {\bf 10}(1960), 35--72.

\bibitem {Cal} A. P. Calder\'on, On the Radon transform and some of its generalizations.  In \textit{Conference on harmonic analysis in honor of Antoni Zygmund, Vol. I, II (Chicago, Ill., 1981)},  Wadsworth Math. Ser. ( Wadsworth, Belmont, CA, 1983), 673--689.

\bibitem {CGL}     G. Chasapis, A. Giannopoulos and D.- M. Liakopoulos,  Estimates for measures of lower dimensional sections of convex bodies. {\it Adv. Math.} {\bf 306}(2017), 880--904.

\bibitem {Chr84} M. Christ, Estimates for the k-plane transform. {\it Indiana Univ. Math. J.} {\bf 33}(1984), 891–-910.

\bibitem {Chr14} M. Christ, Extremizers of a Radon transform inequality. In {\it Advances in Analysis: The Legacy of
Elias M. Stein},   Princeton Math. Ser., 50 (Princeton Univ. Press, Princeton, NJ, 2014), 87–-109.

\bibitem {DP} N. Dafnis and G. Paouris,  Estimates for the affine and dual affine quermassintegrals of convex bodies. {\it Illinois J. Math.} {\bf 56}(2012),  1005--1021.


\bibitem {DKY}  S. Dann, J. Kim and  V. Yaskin, Busemann's intersection inequality in hyperbolic and spherical spaces.  Preprint 2017, arXiv:1706.06776.

\bibitem {DPP}   S. Dann, G. Paouris and P. Pivovarov,  Bounding marginal densities via affine isoperimetry. {\it Proc. Lond. Math. Soc.}  {\bf 113}(3)(2016),  140--162.

\bibitem {Dro14}  A. Drouot,     Sharp constant for a k-plane transform inequality. {\it Anal. PDE} {\bf 7}(2014),  1237–-1252.

\bibitem {Dro15}  A. Drouot,   Quantitative form of certain k-plane transform inequalities. {\it J. Funct. Anal.} {\bf 268}(2015),  1241–-1276.

\bibitem {Dro16}  A. Drouot,  Existence and non-existence of extremizers for certain $k$-plane transform inequalities, Preprint, 2016, arXiv:1412.4876v2.

\bibitem {Dru83}  S. W. Drury,  $L^p$ estimates for the $X$-ray transform. {\it Illinois J. Math.} {\bf 27}(1983),  125--129.

\bibitem {Dru84}   S. W. Drury, Generalizations of Riesz potentials and Lp estimates for certain k-plane transforms. {\it Illinois J. Math.} {\bf 28}(1984),  495--512.

\bibitem {Dru86}   S. W. Drury,   An endpoint estimate for certain k-plane transforms. {\it Canad. Math. Bull.} {\bf 29}(1986),  96--101.

\bibitem {Dru89}   S. W. Drury, A survey of $k$-plane transform estimates. In \textit{Commutative harmonic analysis, Contemp. Math. 91, NY: Canton, (1987)} (Amer. Math. Soc., Providence, RI 1989), 43-–55.

\bibitem {Dru89a}  S. W. Drury, $L^p$ estimates for certain generalizations of $k$-plane transforms. \textit{Illinois J. Math.}  {\bf 33}(3)(1989), 367-–374.

\bibitem {Fl16} T.C. Flock, Uniqueness of extremizers for an endpoint inequality of the k-plane transform. {\it  J. Geom. Anal.} {\bf 26}(2016),  570–-602.

\bibitem {FT} H. Furstenberg and I. Tzkoni, Spherical functions and integral geometry. {\it Israel J. Math.}  {\bf 10}(1971), 327--338.

\bibitem {Ga06} R. J. Gardner, {\it Geometric Tomography,} 2nd edn.,  Cambridge University Press (New York, 2006).

\bibitem {Ga07}    R. J. Gardner, The dual Brunn-Minkowski theory for bounded Borel sets: dual affine quermassintegrals and inequalities. {\it Adv. Math.} {\bf 216}(2007),  358--386.

\bibitem {Go10}  F. B. Gonzalez,    Notes on Integral geometry and Harmonic Analysis. \textit{COE Lecture Note, 24. Math-for-Industry Lecture Note Series.} Kyushu University, Faculty of Mathematics (Fukuoka, 2010).

\bibitem {GK}  F. B. Gonzalez and  T. Kakehi,   Pfaffian systems and Radon
transforms on affine Grassmann manifolds. \textit{Math. Ann.}  \textbf{326}(2)(2003), 237--273.

\bibitem {Gr} M.I. Graev, A problem of integral geometry related to a triple of Grassmann manifolds. {\it
 Functional Analysis and its Applications}, \textbf{34}(4)(2000), 299--301.

\bibitem {Gre1} P.T. Gressman, Sharp $L^p$-$L^q$ estimates for generalized $k$-plane transforms. {\it Adv. Math.} {\bf 214}(2007), 344--365.

\bibitem {Gre2} P.T. Gressman, Uniform sublevel Radon-like inequalities. {\it J. Geom. Anal.} {\bf 23}(2013),  611--652.

\bibitem  {GRy} I. S. Gradshteyn  and I. M. Ryzhik. \textit{Table of Integrals, Series and Products} (Academic Press, 1980).

\bibitem {Gr91}  E. L. Grinberg, Isoperimetric inequalities and identities for k-dimensional cross-sections of convex bodies. {\it Math. Ann.}  {\bf 291}(1991),  75--86.

\bibitem {Had} H. Hadwiger, Radialpotenzintegrale zentralsymmetrischer Rotations-k\"orper und Ungleichheitsaussagen Busemannscher Art. (German). {\it Math. Scand.}  \textbf{23}(1968), 193-–200.

\bibitem {H11}  S. Helgason,  {\it Integral geometry and Radon transform}, Springer, (New York-Dordrecht-Heidelberg-London, 2011).

\bibitem {Ke} F. Keinert.  Inversion of $k$-plane transforms and applications in computer tomography.  \textit{SIAM Review}  \textbf{31}(1989), 273--289.

\bibitem {Kl}  D.A. Klain, Star valuations and dual mixed volumes. {\it  Advances in Math.} {\bf 121}(1996),  80--101.

\bibitem {Lut75a} E. Lutwak, Dual mixed volumes. {\it Pacific J. Math.}  \textbf{58}(1975),  531–-538.

\bibitem {Lut79} E. Lutwak,  Mean dual and harmonic cross-sectional measures. {\it Ann. Mat. Pura Appl.}  \textbf{119}(4)(1979), 139--148.

\bibitem {Lut88} E. Lutwak,  Inequalities for Hadwiger's harmonic Quermassintegrals. {\it Math. Ann.} \textbf{280}(1988),  165–-175.

\bibitem {Lut} E. Lutwak,   Intersection bodies and dual mixed volumes.   {\it Adv. in Math.} \textbf{71}(1988),  232--261.

\bibitem {Lut93} E. Lutwak,  Selected affine isoperimetric inequalities. In {\it Handbook of convex geometry, Vol. A, B}, North-Holland  (Amsterdam, 1993)  151--176.

\bibitem {Mar} A. Markoe, \textit{Analytic Tomography}. Encyclopedia of Mathematics and its Applications {\bf 106}, Cambridge Univ. Press (New York, 2006).

\bibitem {Mi62} S.G. Mikhlin, \textit{Multidimensional singular integrals and integral equations}, Fizmatgiz (Moscow,  1962).

\bibitem {M}  R. E. Miles,  A simple derivation of a formula of Furstenberg and Tzkoni. {\it Israel J. Math.} {\bf 14}(1973), 278--280.

\bibitem  {Ob} D. M. Oberlin, $L^p$--$L^q$ mapping properties of the Radon transform. In {\it Banach spaces, harmonic analysis, and probability theory} (Storrs, Conn., 1980/1981),  Lecture Notes in Math., {\bf 995}, Springer, (Berlin, 1983), 95--102.

\bibitem  {OS} D. M. Oberlin and E. M. Stein,  Mapping properties of the Radon transform. \textit{Indiana Univ. Math. J.} \textbf{31}(1982), 641--650.

\bibitem  {ObR} R. Oberlin,  Two bounds for the X-ray transform. {\it Math. Z.} {\bf 266}(2010), 623-644.

 \bibitem  {PP}    G. Paouris and P.  Pivovarov,   Randomized isoperimetric inequalities.  Preprint 2016,	arXiv:1607.00519 [math.MG].

\bibitem  {PV}    G. Paouris and P. Valettas,  Neighborhoods on the Grassmannian of marginals with bounded isotropic constant. {\it J. Funct. Anal.} \textbf{267}(2014),  3427--3443.

\bibitem  {Ru02b} B. Rubin,  Inversion formulas for the spherical  Radon transform and the generalized cosine transform.   \textit{Advances in Appl. Math.} \textbf{29}(2002), 471--497.

\bibitem   {Ru04b} B. Rubin,  Reconstruction of functions from their integrals over
 $k$-dimensional planes. {\it Israel J. of Math.} \textbf{141}(2004), 93--117.

\bibitem  {Ru04d}  B. Rubin,   Radon transforms on affine Grassmannians. {\it Trans. Amer. Math. Soc.} \textbf{356}(2004), 5045--5070.

\bibitem   {Ru14} B. Rubin,   Weighted norm inequalities for k-plane transforms. {\it  Proc. Amer. Math. Soc.} \textbf{142}(2014), 3455--3467.

\bibitem  {Ru15}  B. Rubin,   {\it  Introduction to Radon transforms (with elements of fractional calculus and harmonic analysis)},  Cambridge University Press (New York, 2015).

\bibitem {RW16a} B. Rubin and Y. Wang,    On  Radon transforms between lines and hyperplanes.    {\it Intern. Journal of Math.} (to appear), 	 arXiv:1601.03826v3 [math.FA].

\bibitem {RW16b}  B. Rubin and Y. Wang,  New inversion formulas for Radon transforms on affine Grassmannians. Preprint 2016,	  	arXiv:1610.02109 [math.FA].

\bibitem {RZ} B. Rubin and Gaoyong Zhang, Generalizations of the Busemann-Petty problem for sections of convex
bodies. {\it J. Funct. Anal.}  \textbf{213}(2)(2004), 473--501.

\bibitem {Sc85} R. Schneider,  Inequalities for random flats meeting a convex body. {\it J. Appl. Probab.} {\bf 22}(1985), 710--716.

\bibitem {Sc13} R. Schneider, \textit{Convex Bodies: The Brunn-Minkowski Theory,} 2nd edn, Cambridge Univ. Press (New York, 2013).

\bibitem  {SchW}  R. Schneider and  W. Weil, \textit{ Stochastic and Integral Geometry},  Springer (Berlin- Heidelberg, 2008).

\bibitem {So76} D. C. Solmon, The $X$-ray transform. \textit{ J. Math. Anal. Appl.}  \textbf{56}(1976), 61--83.

\bibitem {So79}  D. C. Solmon,  A note on $k$-plane integral transforms.  \textit{Journal of Math. Anal. and Appl.} \textbf{71}(1979), 351--358.

\bibitem  {St} E. M. Stein,  \textit{Singular integrals and differentiability
properties of functions}, Princeton Univ. Press (Princeton, NJ, 1970).

\bibitem {Str81}  R.S. Strichartz,   $L^p$-estimates for Radon transforms in Euclidean  and non-euclidean spaces.  \textit{Duke Math. J.} \textbf{ 48}(1981),  699--727.

\bibitem  {Str86}  R.S. Strichartz,    Harmonic analysis on Grassmannian bundles. \textit{ Trans. of the Amer. Math. Soc.} \textbf
{296}(1986), 387-409.

\bibitem  {TW} T. Tao and J. Wright, $L^p$ improving bounds for averages along curves. {\it J. Amer. Math. Soc.} {\bf 16}(2003), 605--638.

\bibitem  {TE} F. G. Tricomi  and A. Erd\'elyi, The asymptotic expansion of a ratio of gamma functions. {\it Pacific J. Math.} {\bf 1}(1951),
133--142.

\bibitem  {Z} G. Zhang, Geometric inequalities and inclusion measures of convex bodies. {\it Mathematika} {\bf 41}(1994),  95--116.


\end{thebibliography}

\end{document}